\documentclass[a4paper,12pt,oneside,dvipsnames]{amsart} 

\usepackage{amsmath,amsfonts,amsthm,amssymb}

\usepackage{xcolor}
\usepackage{graphicx}
\usepackage{geometry}
\usepackage{cite}

\usepackage{bbm}
\usepackage{tikz-cd}

\usepackage{hyperref}
\hypersetup{colorlinks=true, citecolor=PineGreen, linkcolor=RoyalBlue, linktoc=page,urlcolor=RoyalBlue}


\usepackage{geometry}



\theoremstyle{plain}
\newtheorem{theorem}{Theorem}[section] 
\newtheorem{lemmy}[theorem]{Lemma}
\newtheorem{prop}[theorem]{Proposition}
\newtheorem{cor}[theorem]{Corollary}

\theoremstyle{remark}
\newtheorem{rem}[theorem]{Remark}

\theoremstyle{definition}
\newtheorem{defn}{Definition}[section]



\newcommand{\A}{\mathbb{A}}

\newcommand{\R}{\mathbb{R}}
\newcommand{\Rx}{\mathbb{R}^{\times}}
\newcommand{\C}{\mathbb{C}}
\newcommand{\Cx}{\mathbb{C}^{\times}}

\newcommand{\Z}{\mathbb{Z}}
\newcommand{\Zp}{\mathbb{Z}_{p}}

\newcommand{\Q}{\mathbb{Q}}
\newcommand{\Qx}{\mathbb{Q}^{\times}}

\newcommand{\Pb}{\mathbb{P}}

\newcommand{\ggT}{\operatorname{gcd}}
\newcommand{\kgV}{\operatorname{lcm}}

\newcommand{\Ac}{\mathcal{A}}

\newcommand{\Sch}{\mathcal{S}}

\newcommand{\Wh}{\mathcal{W}}


\newcommand{\uh}{\mathfrak{H}}

\newcommand{\of}{\mathfrak{o}}
\newcommand{\ofx}{\mathfrak{o}^{\times}}

\newcommand{\pf}{\mathfrak{p}}
\newcommand{\af}{\mathfrak{a}}

\newcommand{\Hf}{\mathfrak{H}}

\newcommand{\Fx}{F^{\times}}




\newcommand{\GL}{\operatorname{GL}}

\newcommand{\SL}{\operatorname{SL}}

\newcommand{\SLt}{\widetilde{\operatorname{SL}}}
\newcommand{\GLt}{\widetilde{\operatorname{GL}}}

\newcommand{\SO}{\operatorname{SO}}


\newcommand{\Gt}{\widetilde{G}}
\newcommand{\Ht}{\widetilde{H}}
\newcommand{\Zt}{\widetilde{Z}}
\newcommand{\zt}{\tilde{z}}
\newcommand{\Bt}{\widetilde{B}}

\newcommand{\Mt}{\widetilde{M}}



\newcommand{\N}{\mathbb{N}}


\renewcommand{\Im}{\operatorname{Im}}


\newcommand{\ind}{\operatorname{ind}}
\newcommand{\sgn}{\operatorname{sgn}}

\newcommand{\Vol}{\operatorname{Vol}}




\newcommand{\bs}{\backslash}

\newcommand{\Mod}[1]{\ (\operatorname{mod}\ #1)}
\newcommand{\f}{\operatorname{f}}

\newcommand{\dxy}{d^{\times}y}

\newcommand{\eve}{\operatorname{eve}}






\newcommand{\smat}[4]{\left(\begin{smallmatrix}
#1 & #2 \\ #3 & #4
\end{smallmatrix}\right)}


\newcommand{\abs}[1]{\lvert{#1}\rvert}

\newcommand{\otherwise}{\text{otherwise}}

\newcommand{\ifs}{\text{if }}

\newcommand{\emb}{\hookrightarrow}



\title{\texorpdfstring{Vorono\"i}{Voronoi} summation for half-integral weight automorphic forms}

\author{Edgar Assing}
\author{Andrew Corbett}

\newcommand{\Date}{7$^{\mathrm{th}}$ September 2020}
\date{\Date}

\address{Mathematisches Institut, Endenicher Allee 60, D-53115 Bonn, Germany}
\email{assing@math.uni-bonn.de}

\address{The Innovation Centre, University Of Exeter, Exeter, EX4 4RN, UK}
\email{A.J.Corbett@exeter.ac.uk}


\begin{document}

\begin{abstract}

A general Vorono\"i summation formula for the (metaplectic) double cover of $\GL_2$ is derived via the representation theoretic framework \`a la  Ichino--Templier.
The identity is also formulated classically and used to establish Vorono\"i summation formulae for half-integral weight modular forms and Maa\ss\ forms.


\end{abstract}

\maketitle

\section{Introduction}

The Vorono\"i summation formula for classical modular forms has a long history. This formula and its generalisations are crucial technical tools in the proofs of many deep results of analytic number theory. One such example is Jutila's treatment of exponential sums involving Hecke eigenvalues \cite{jutila}, which implies a sixth moment bound for the $L$-function of a Hecke eigenform of level $1$ on the critical line. Other applications include shifted convolution problems, spectral reciprocity formulae, estimates of the $L^4$-norm of Maa\ss\  forms and many more.
A rolling history and other applications of Vorono\"i summation are collected in \cite{miller-schmidt-book}.

Such classical Vorono\"i formulae are very well understood. Their manifestation is closely related to the functional equation of the underlying $L$-function and associated Atkin--Lehner theory \cite{KMV-rankin}. However, they can also be formulated more abstractly in the language of automorphic representations for $\GL_2$; see \cite{templier_note, AC_voronoi}. Adopting a representation theoretic stance permits greater generality in describing local aspects, such as non-trivial levels, old forms and local $p$-adic test function. For example, for all $n \geq 2$, the benchmark classical formula for $\GL_n$ \cite{miller-schmid-general} has been completed to the greatest generality permitted by the representation theoretic approach \cite{ichino-templier_voronoi, corbett_voronoi}.

Our goal here is to overhaul the theory for half-integral weight automorphic forms.
From a classical angle, recall the construction of half-integral weight holomorphic modular forms and half-integral Maa\ss\  forms. These correspond to smooth functions of moderate growth on the upper half-plane $\uh=\{z\in\C : \Im(z)>0\}$ which, up to a character $\chi$, transform under the action of the congruence lattice $\Gamma_0(4N)\subset \SL_{2}(\R)$ with respect to the $\vartheta$ multiplier. If $f$ is such a form of weight $k+\frac{1}{2}$, level $4N$, and character $\chi$, then $f$ has a Fourier expansion at the cusp $\infty$, given by
$
f(x+iy)=\sum_{n\in\Z} a_{f}(n) \kappa_f(ny)e(nx).
$
The Fourier coefficients $a_{f}(n)$ have strikingly different properties in contrast to their integral weight contemporaries. One being that, even for Hecke-eigenforms, the associated Dirichlet series $\sum_{n>0}a_f(n)n^{-s}$ is not Eulerian.

Using the modularity properties of a holomorphic half-integral weight modular form $f$, Duke--Iwaniec \cite[Theorem 4]{duke-iwaniec-half} derive a preliminary case of such a formula which they then use to determine cancellation in the sum $\sum_{p\leq X} \chi(p) a_{f}(p)$ for a (principal or primitive) Dirichlet character $\chi$. A different Vorono\"i formula has been given  for Hecke--Maa\ss\ forms by Bykowski\u{\i} \cite{bykowskii}. This formula has recently found an application to the mass distribution of Saito--Kurakawa lifts amongst Siegel modular forms; see \cite{blomer-corbett_symplectic}. These are two specific examples of Vorono\"i-type summation formulae for half-integral weight. In general, such formulae are sparsely available and, to the best of our knowledge, there is no unified approach to be found in the literature.

In this work we give a general Vorono\"i formula which applies to any automorphic form arising from the representation theory of $\GLt_2(\A)$; see Theorem \ref{thm:adelic-Voronoi}.
We also give an explicit classical formulation of the result, applying to both the modular and Maa\ss\ cases, alongside various related formulae in \S \ref{sec:main-classical}. We intend the ``oven ready'' classical formulae to carry sufficient generality whilst remaining explicit enough for applications.

To preview the main classical result (Theorem \ref{thm:classical-Voronoi}),
let $k$ be a non-negative integer, $N$ a positive multiple of $4$ and $\chi$ a Dirichlet character modulo $N$.
Let $f$ denote a cuspidal weight $k+\frac{1}{2}$ autormorphic form of level $N$ and character $\chi$ as described in Definition \ref{def:classical-forms}, which includes (normalised) holomorphic modular forms and Maa\ss\ forms. Let $F\colon\R\to\R$ be a smooth (test) function with compact support in $\R_{>0}$. Further take $b\in \N$ and $a\in \Z$ with $\ggT(a,bN)=1$. Then we have the half-integral Vorono\"i summation formula
\begin{multline}
	\sum_{n\in\Z_{\neq 0}}e\bigg(\frac{an}{b}\bigg) a_{f}(n)  F\left(n\right) = \sum_{n\in\Z_{\neq 0}}e\left(-n\frac{\overline{a}}{b\delta(\mathfrak{b})}\right) a_f(n;\mathfrak{b})[\mathcal{H}_f^{\sgn(n),+}F]\left(\frac{n}{\delta(\mathfrak{b})b^2}\right)
	\nonumber
\end{multline}
where $a_f(n;\mathfrak{b})$ is the Fourier coefficient of $f$ at the cusp $\mathfrak{b} = \frac{a}{b}$ (see \eqref{eq:cusp-fourier-expansion}) and the Bessel transform $\mathcal{H}_f^{\pm,+}F$ of $F$, determined by the archimedean type of $f$ alone, is given in \S \ref{sec:bessel-classical}.


\subsection{From automorphic forms to metaplectic representations}
Automorphic forms of half-integral weight correspond to automorphic representations of the metaplectic cover $\GLt_2$ of $\GL_2$. The characterising feature of the metaplectic group is that the $2$-group of rotations
$M=\{\pm 1\}\subset \GL_{2}$ is projected onto by the cyclic $4$-group $\Mt\subset\GLt_{2}$. A physicist might understand the $2$-group of rotations by revolving a particle by $\pi$ degrees and then again back to $2\pi$ degrees, completing a full turn. Whereas if the particle were to carry a genuine action of $\Mt$ then it must complete two full turns, a $4\pi$ revolution, before returning to the origin. The particle is said to have the property of `half-integral spin'. In the theory of automorphic forms, the genuine action of $\Mt$-representations is realised though the structure of the related cocycles, originally defined via the $\vartheta$-multiplier system.

Such a theory of representations of $\GLt_{2}$ plays a crucial role in identifying the local-global framework of automorphic forms of `half-integral weight'.
The beautiful line of work of S.~Gelbart and I.~Piatetski-Shapiro \cite{gelbart-ps-distinguished,gelbart-ps-on-shimura,gelbart-ps-Shimura-vs-Walsdpurger} establishes all the basics needed to build a foundational theory on which one may construct Vorono\"i-type formulae.
We give a concise review of this theory in \S \ref{sec:not} and \S \ref{sec:back}. It is on these bases that we are able to prove Theorem \ref{thm:adelic-Voronoi}, our main theorem. This result, derived in \S \ref{sec:main}, provides a flexible adelic identity which may be specialised to give various Vorono\"i-type formula. Finally, in \S \ref{sec:classical} we move to the classical setting and derive to more specific, but still quite general, Vorono\"i formulae. To produce our classical formulae we require the necessary adelisation procedure for half-integral weight forms. Whilst standard in the $\GL_2$ case and certainly well-known to experts for $\GLt_2$, the literature seems only to contain incoherent parts of the story. We give a complete, modern account of the process in \S \ref{sec:classical}.

To conclude this introduction, we sketch the basic Vorono\"i mechanism in the familiar world of `integral spin'; that is, for automorphic representations of $\GL_2$ giving action only to the two-group of rotations $M$, resulting in whole integral weight at infinity. This will serve as a guide by which to navigate the metaplectic situation $M\rightsquigarrow \Mt$ and motivate the constructions we recall in \S \ref{sec:not}-\ref{sec:back}.

\subsection{Strong Gelfand pairs as a source of Vorono\"i summation}
\label{sec:intro-gelfand}

To give insight into the big picture, recall the interpretation of Vorono\"i summation in the setting of strong Gelfand formations for $\GL_2$. This will serve as a guide, since we shall rely on similar ideas in our treatment of the $\GLt_2$ case.

A `strong Gelfand pair' consists of a topological group $G$ and a subgroup $H\subset G$ such that
$$
\dim_{\C}\text{Hom}_H(\sigma,\pi\vert_H)\leq 1 \nonumber
$$
for all irreducible representations $\pi$ of $G$ and $\sigma$ of $H$. We call the diagram
\begin{equation}
\label{eq:strong-gelfand}
\begin{tikzcd}
& G \arrow[dash]{rd} \arrow[dash]{ld} &  \\
H_1 \arrow[dash]{rd} & & H_2 \arrow[dash]{ld}\\
& E & 
\end{tikzcd}\nonumber
\end{equation}
a `strong Gelfand formation' if each contained edge corresponds to a strong Gelfand pair. It is well known that such formations can be used to study periods; that is, elements of $\text{Hom}_E(\pi\vert_E,\delta)$ where $\pi$ and $\delta$ are irreducible representations of $G$ and $E$, respectively. This is achieved by taking spectral expansions along both intermediate sides of the formation.

It was pointed out in \cite{reznikov_rs}, and elaborated on in \cite{templier_note}, that the strong Gelfand formation
\[
\begin{tikzcd}
& G = \GL_2 \arrow[dash]{rd} \arrow[dash]{ld} &  \\
N=\left\{ \left(\begin{matrix} 1 & \star \\ 0 & 1 \end{matrix} \right)  \right\} \arrow[dash]{rd} & & \overline{N}=\left\{  \left(\begin{matrix} 1 & 0 \\ \star & 1 \end{matrix} \right) \right\} \arrow[dash]{ld}\\
& \{ e \} & 
\end{tikzcd}
\]
can be seen as the source of Vorono\"i summation. In the case of the metaplectic cover, $\GLt_2$, the fact that $N^*\subset \GLt_2$ is not a strong Gelfand pair becomes apparent. The formulae presented in this work are instead motivated by the strong Gelfand formation
\[
\begin{tikzcd}
&\GLt_2 \arrow[dash]{rd} \arrow[dash]{ld} &  \\
\Zt N^* \arrow[dash]{rd} & & \Zt\overline{N}^* \arrow[dash]{ld}\\
& \{ 1 \}^* & 
\end{tikzcd}
\]
where $\Zt\subset \GLt_2$ is the pre-image of the centre of $\GL_2$ under $\GLt_2\rightarrow\GL_2$ and $N^*$ a realisation of $N$ inside $\GLt_2$ (see \ref{eq:g-star}). The groups $\Zt N^*\subset \GLt_2$ form a strong Gelfand pair by the uniqueness of an analogue of the Whittaker model in the metaplectic setting; see \ref{sec:back}.

The technique enabling the $\GL_2$ formation to give rise to Vorono\"i summation is sketched below in \S \ref{sec:gltwo-revisited}. We pick up the story for $\GLt_2$ again in \S \ref{sec:main}.

\subsection{The integral weight case for \texorpdfstring{$\GL_2$}{GL(2)} revisited}
\label{sec:gltwo-revisited}

Given a suitable element $\phi\in L^2(G(F)\backslash G(\A_F),\omega)$ which transforms with respect to some irreducible automorphic representation $\pi_{\phi}$ we expand the period $\phi\mapsto \phi(e)$ through the right side of the formation to get
\begin{equation}
	\phi(e) = \sum_{\psi\in \widehat{N(F)}} W_{\psi}(\phi)\cdot \psi(e) \nonumber
\end{equation} 
where $$\phi\mapsto W_{\psi}(\phi) = \int_{N(F)\backslash N(\A_F)} \phi(n)\overline{\psi(n)}dn$$ is the Whittaker period, the adelic realisation of a Fourier coefficient. Of course, so far we have done nothing more than write down the Whittaker expansion in a strange way. Performing the same procedure on the right hand side of the formation results in the identity
\begin{equation}
	\sum_{\psi\in \widehat{N(F)\backslash N(\A)}} W_{\psi}(\phi)\cdot \psi(e) = \sum_{\psi'\in \widehat{\overline{N}(F)}} \tilde{W}_{\psi'}(\phi)\cdot \psi'(e).\label{eq:GL_2_fund_id}
\end{equation}
Here we encounter the complementary periods $$\phi\mapsto \tilde{W}_{\psi'}(\phi) = \int_{\overline{N}(F)\backslash \overline{N}(\A_F)} \phi(\overline{n})\overline{\psi'(\overline{n})}d\overline{n}.$$
Up to this point the formula is essentially a triviality and there are two key steps remaining before deriving a useful Vorono\"i summation formula.

\textit{Step~$1$} is to set up the left-hand side. We introduce the notation $a(y)=\smat{y}{}{}{1}$, as used for matrices throughout. First of all let us observe that we can identify $F^{\times}$ with $\widehat{N(F)\backslash N(\A)}$ by fixing some $N(F)$-invariant character $\psi$ explicitly realised via $$\xi\mapsto \left[n\mapsto\psi_{\xi}(n) = \psi(a(\xi)na(\xi)^{-1})\right].$$ Now we can rewrite
\begin{equation}
	W_{\psi_{\xi}}(\phi)\cdot \psi(e) = W_{\phi}(a(\xi)) = \int_{N(F)\backslash N(\A_F)} [\pi(a( \xi))\phi](n)\overline{\psi(n)}dn. \nonumber
\end{equation}
By multiplicity one for the Whittaker model, the global Whittaker function factors into local ones. We get
\begin{equation}
	W_{\phi}(a(\xi)) = \prod_v 	W_{\phi,v}(a(\xi_v)).\nonumber
\end{equation}
For a finite set of places $S$, including each place at which $\pi$ ramifies and all archimedean places, identify the Schwartz functions $f_v\in \mathcal{S}(F_v^{\times})$ for $v\in S$ given by
\begin{equation}
	W_{\phi,v}(a(\xi_v)) =\abs{\xi}_v^{\frac{1}{2}} f_v(\xi_v).
\end{equation}
This choice could too be made in reverse due to the inclusion of the Schwartz space in the Kirillov model.
For $v\not\in S$ we assume that $W_{\phi,v}(\cdot)$ is the unique spherical element in the local Whittaker model normalised by $W_{\phi,v}(1)=1$. Thus $\phi$ is now completely determined by these data. The right hand side of \eqref{eq:GL_2_fund_id} reads
\begin{equation}
	\sum_{\xi\in F^{\times}} \lambda_{\pi} \left( \frac{(\xi)}{((\xi),\prod_{v\in S_{<\infty}}\mathfrak{p}_v^{\infty})}\right) \prod_{v\in S}f_v(\xi).\nonumber
\end{equation}

Step~$2$ is to explicate the right hand side. Using $\overline{N} = \left(\begin{matrix} 0 & 1 \\ 1 & 0\end{matrix}\right)N\left(\begin{matrix} 0 & 1 \\ 1 & 0\end{matrix}\right)$ and making the correct identifications we can rewrite the period on the right-hand side as
\begin{equation}
	\tilde{W}_{\psi'_{\xi}}(\phi)\cdot \psi'_{\xi}(e) = W_{\phi}\left(\left(\begin{matrix} 0 & 1 \\ 1 & 0\end{matrix}\right)a(\xi)\right) = \prod_v W_{\phi,v}\left(\left(\begin{matrix} 0 & 1 \\ 1 & 0\end{matrix}\right)a(\xi)\right).\nonumber
\end{equation}
For $v\not\in S$ this enables us to evaluate the local Whittaker function simply by using right-$\GL_2(\of)$ invariance. If $v\in S$, one can either use the known transformation behaviour of $W_{\phi,v}$, for example local Atkin-Lehner/new-vector theory, or derive an integral representation for $W_{\phi,v}\left(\left(\smat{ 0} { 1} {1} {0}\right)a(\xi)\right)$ featuring a Bessel kernel and the original function $f_v$. Note that the Bessel-transform can be constructed solely by using Fourier analysis and the uniqueness of $(N,\psi)$ and $(\overline{N},\psi')$ Whittaker models. Nevertheless, it has a well known connection to the local functional equation.

Indeed, it seems that \textit{all} Vorono\"i type formulae concerning Maa\ss\  forms and classical holomorphic modular forms can be deduced from this general strategy using adelisation and de-adelisation.

The reason we discuss this strategy in quite some detail is because we now follow the same ideas in the metaplectic/half-integral weight world. The main issue that arises is that multiplicity one fails for the Whittaker model of representations of $\GLt_2(\A)$ and we constantly have to account for this.

\section{Notation and Metaplectic Constructions}\label{sec:not}

For two integers $a$ and $b$, we write $a|b$ to denote that $a$ divides $b$ and moreover $a|b^\infty$ shall denote that $a|b^n$ for some positive integer $n$. We start counting $\N$ at $\inf\N =1$. We shall use the standard exponential notation $e(z)=e^{2\pi i z}$ for $z\in\C$. For each $z=re^{i\theta}\in\C$ we define a branch cut of the square root by letting $z^{1/2}$ denote the complex number $\sqrt{\abs{z}}e^{i\theta'}$ where $\theta'\in \frac{\arg(z)}{2}+2\pi i \Z$ such that $-\pi/2<\theta' \leq \pi/2$. If $k\in\Z$ then let $z^{k+\frac{1}{2}}:=z^{k}z^{\frac{1}{2}}$.
If $f\colon X\rightarrow \C$ is a function on a monoid $X$ then denote the right-translates of $f$ by $R(y)f(x)=f(xy)$ for each $y\in X$.

Let $F$ be a ring with unit $1\in F$. We introduce the following matrices in $\GL_{2}(F)$. For $\lambda\in\Fx, x\in F,$ and $y\in \Fx$ let
\begin{equation}
z(\lambda) = \begin{pmatrix}
\lambda& \\
& \lambda
\end{pmatrix};\quad
n(x) = \begin{pmatrix}
1&x \\
&1
\end{pmatrix};\quad
\overline{n}(x) = \begin{pmatrix}
1& \\
x&1
\end{pmatrix};\quad
w=\begin{pmatrix}
&1\\
-1&
\end{pmatrix};
\nonumber
\end{equation}
\begin{equation}
a(y) = \begin{pmatrix}
y& \\
&1
\end{pmatrix}; \quad
\overline{a}(y) = \begin{pmatrix}
1& \\
&y
\end{pmatrix};\quad
a'(y)=\begin{pmatrix}
y&\\
&y^{-1}
\end{pmatrix}
.
\nonumber
\end{equation}
These matrices determine the subgroups $Z(F)=\lbrace z(\lambda) :  \lambda\in \Fx\rbrace$, $N(F)=\lbrace n(x) :  x\in F\rbrace$, $A(F)=\lbrace a(y) :  y\in \Fx\rbrace$, $A_0(F)=\lbrace a'(y) :  y\in \Fx\rbrace$, and $B=ZNA$, omitting the parenthetic $F$ when convenient. We denote the square-roots of unity in $F$ by $Z_{2}=\lbrace \epsilon\in F : \epsilon^{2}=1 \rbrace$. We will also encounter the cyclic group of order four, which we denote by $\widetilde{M}$.

Over the real numbers $\R$ we define the rotation matrix 
\begin{equation}
k(\theta)=\begin{pmatrix}
\cos(\theta)&\sin(\theta)\\
-\sin(\theta)&\cos(\theta)
\end{pmatrix} \nonumber
\end{equation}
and the corresponding group $\SO(2) = \{ k(\theta)\colon \theta\in [0,2\pi)\}$. We further define $A_0^+ =\{ a'(y)\colon y\in \R_{>0} \}\subset A_0(\R)$.

As usual, let $\Gamma_0(N)$ denote the group of matrices $\left(\begin{smallmatrix}a&b\\c&d
\end{smallmatrix}\right) \in \SL_2(\Z)$ such that $c\equiv 0 \Mod{N}$. The subgroup $\Gamma_1(N)$ consists of those matrices in $\Gamma_0(N)$ with the additional property that $a \equiv d \equiv 1 \Mod{N}$.

\subsection{The metaplectic group over local fields}

We specify the local field $F$ of characteristic $0$ that we consider.
If $F$ is archimedean, it is either $\R$ or $\C$.
If non-archimedean we assume $F$ to be a finite extension of $\Q_p$ in which $p$ is the residual characteristic of $F$.

\subsubsection{Definitions}

If $F\neq\C$ then the group $\SL_{2}(F)$ admits a unique, non-trivial two-fold cover $\SLt_{2}(F)$, see \cite{moore,weil-unitary}. This group is known as the `metaplectic group' and is defined by the exact sequence
\begin{equation*}
1\longrightarrow Z_{2} \longrightarrow \SLt_{2}(F)  \longrightarrow \SL_{2}(F)\longrightarrow 1.
\end{equation*}
If $F=\C$ we set $\SLt_{2}(F)=\SL_{2}(\C)\times Z_{2}$, whence the above sequence splits.

Any automorphism of $\SL_{2}(F)$ lifts uniquely to an automorphism of $\SLt_{2}(F)$. In particular, $A=\lbrace a(y) :  y\in \Fx\rbrace$ acts on $\SL_{2}(F)$ by conjugation, lifting uniquely to an action of $A$ on $\SLt_{2}(F)$. We thus define the metaplectic cover of $\GL_{2}(F)$ to be the semi-direct product $\GLt_{2}(F)=\SLt_{2}(F)\rtimes A$, once again obtaining an exact sequence of locally compact groups
\begin{equation}\label{eq:exact-sequence}
1\longrightarrow Z_{2} \longrightarrow \GLt_{2}(F)  \longrightarrow \GL_{2}(F)\longrightarrow 1.
\end{equation}
As before, \eqref{eq:exact-sequence} splits if and only if $F=\C$. If $H\leq \GL_{2}(F)$ then let $\Ht\leq \GLt_{2}(F)$ denote the pre-image of $H$ under $\GLt_{2}(F)\rightarrow \GL_{2}(F)$. We say \eqref{eq:exact-sequence} splits over $H$ if $\Ht\cong H\times Z_{2}$; for example, \eqref{eq:exact-sequence} splits over the subgroups $N$ and $A$ defined as in \S \ref{sec:not}.

\subsubsection{Explicit form of the metaplectic group}

More concretely, arbitrary elements of $\GLt_{2}(F)$ may be denoted as pairs $(g,\epsilon)$ for $g\in \GL_{2}(F)$ and $\epsilon\in Z_{2}$. Multiplication in $\GLt_{2}(F)$ is then given by
\begin{equation}\label{eq:local-multiplication}
(g_{1},\epsilon_{1})\cdot (g_{2},\epsilon_{2}) = (g_{1}g_{2},\beta(g_{1},g_{2})\epsilon_{1}\epsilon_{2})
\end{equation}
where $\beta\colon \GL_{2}(F)\times \GL_{2}(F) \rightarrow Z_{2}$ is the co-cycle defined by
\begin{equation}\label{eq:local-beta}
\beta(g_{1},g_{2})=\left[ \frac{\ell(g_{1}g_{2})}{\ell(g_{1})}, \frac{\ell(g_{1}g_{2})}{\ell(g_{2})} \det(g_{1})\right]s(g_1)s(g_2)s(g_1g_2)
\end{equation}
where $[\cdot , \cdot ]=[\cdot , \cdot ]_{F}$ is the quadratic Hilbert symbol,
\begin{equation}
[a,b] = \left\lbrace \begin{array}{rl}
1 & \ifs z^{2} = a x^{2} + by^{2} \text{ for some non-trivial } (x,y,z)\in F^{3}\\
-1 & \otherwise
\end{array}\right.
\end{equation}
for $a,b\in\Fx$, and $\ell,s\colon \GL_{2}(F)\rightarrow\Fx$ are the functions
\begin{equation}
\ell\left(\begin{pmatrix}
a&b\\
c&d
\end{pmatrix}\right)= \left\lbrace \begin{array}{ll}
c & \ifs c\neq 0\\
d & \ifs c=0
\end{array}\right.\nonumber 
\end{equation}
and
\begin{equation}
	s\left(\left(\begin{matrix} a& b\\ c& d\end{matrix}\right)\right) = \begin{cases} [c,d\det(k)] &\text{ if }cd\neq 0,\, F\neq \R,\C  \text{ and $v(c)$ is odd,}\\
	1 &\text{ else.} \end{cases} \nonumber 
\end{equation}
For each $g\in \GL_{2}(F)$, we introduce the notation
\begin{equation}
\label{eq:g-star}
g^{*}:=(g,1)\in\GLt_{2}(F).
\end{equation}
We shall apply this notation to subgroups $H\leq \GL_{2}(F)$ so that $H^{*}=\{h^{*} : h\in H\}$. Note that $H^*$ is not necessarily a subgroup of $\GLt_2(F)$. We also abuse terminology and let $\epsilon$ denote the element $(1,\epsilon)\in \GLt_2$ for $\epsilon\in Z_{2}$.

\subsubsection{Genuine functions and representations}

We say a function $\Phi$ on $\GLt_{2}(F)$, valued in any module containing $Z_{2}$, is \textit{genuine} if $\Phi(g(1,\epsilon))=\epsilon \Phi(g)$ for all $g\in\GLt_{2}(F)$ and $\epsilon\in Z_{2}$. Otherwise, we say that $\Phi$ \textit{factors through $\GL_{2}(F)$}. In what follows we shall only consider genuine representations of $\GLt_{2}(F)$. 

\subsubsection{The centre}\label{sec:the-centre}

The sequence \eqref{eq:exact-sequence} does not split over the centre $Z$ of $\GL_{2}(F)$. Whilst $\Zt$ itself is abelian, it commutes with the rest of $g\in \GL_2$ via the identity $g^{*}z(\lambda)^{*}=z(\lambda)^{*}g^{*}[\lambda,\det g]$ for all $\lambda\in\Fx$. It is then evident that \eqref{eq:exact-sequence} does split over $Z^{2}=\lbrace z(\lambda^{2})\colon \lambda\in\Fx\rbrace\leq Z
$ and in fact the centre of $\GLt_2$ is given by the pre-image of $Z^{2}$:
\begin{equation*}
\Zt^{2}=Z^{2}\times Z_{2}.
\end{equation*}
For the quotient, we identify the representatives $\Zt/\Zt^{2}=\{z(\lambda)^{*} : \lambda\in \Fx/(\Fx)^{2}\}$. Note that any genuine function $f\colon \Zt\rightarrow\C$ is of the form $\epsilon f((z(\lambda),\epsilon))= f(\lambda)$ for $\lambda\in \Fx$ and $\epsilon\in Z_{2}$. We make the convention of taking the normalised counting measure $d\zt$ on $\Zt/\Zt^{2}$. Explicitly, for $f\colon \Zt/\Zt^{2}\rightarrow\C$ define
\begin{equation*}
\int_{\Zt/\Zt^{2}} f(\zt)\,d\zt\, =\,\frac{1}{\#(\Fx/(\Fx)^{2})}\, \sum_{\lambda\in \Fx/(\Fx)^{2}} f(z(\lambda)^{*}).
\end{equation*}

\subsection{Non-archimedean local fields: notations and conventions}\label{sec:non-arch-notation}

In this section we suppose that $F$ is a local field subject to the ultrametric property. 
Denote by $\of$ the ring of integers of $F$ and by $\pf$ the maximal ideal of $\of$. We fix a `uniformiser', that is an $\of$-generator of $\pf$, and denote it by $\varpi$; we let $q=\# (\of/\pf)$. Let $\abs{\,\cdot\,}=\abs{\,\cdot\,}_{F}$ be the absolute value on $F$, normalised so that $\abs{\varpi}=q^{-1}$, and
$v=v_{F}$ the valuation on $F$ defined via $\abs{x} = q^{-v(x)}$.

For a multiplicative character $\chi$ of $\Fx$ we define its `(exponent) conductor' to be the integer
\begin{equation*}
a(\chi)=\min\{k\geq 0\,: \,\chi(u)=1\text{ for all } u\in \ofx\cap(1+\pf^{k})\}.
\end{equation*}
Once and for all fix an additive character $\psi$ of $F$ which we assume to be unramified; that is, $\min\{r\geq 0 : \psi\vert_{\pf^{r}}=1\}=0$. The additive group $F$ is self-dual. After making our choice of $\psi$, we identify the dual group of $F$ by defining $\psi_{\lambda}(x)=\psi( \lambda x)$ for each $\lambda\in F$.

\subsubsection{Congruence subgroups}

The group $\GL_{2}(F)$ contains the maximal compact subgroup $K:=\GL_{2}(\of)$ and, for each integer $n\geq 0$, the congruence subgroups
\begin{equation*}
K_{0}(n)=\left\lbrace\, \begin{pmatrix} a&b\\c&d\end{pmatrix}\in K\,:\, c\in \pf^{n}\,\right\rbrace\leq K
\end{equation*}
and
\begin{equation*}
K_{1}(n)=\left\lbrace\,\begin{pmatrix} a&b\\c&d\end{pmatrix}\in K\,:\,c\in \pf^{n},\, d\in U_{n}\,\right\rbrace\leq K_{0}(n),
\end{equation*}
components of the filtrations $K_{i}(n)\subset K_{i}(n-1)\subset\cdots\subset K_{i}(0)=K$ for $i=0,1$. Similarly we define $K^1(n)$ by assuming $a\in U_n$ rather than $d$. Over the metaplectic group, \eqref{eq:exact-sequence} splits trivially over $K^1(n)$ (see \cite[Prop.\ 2.8]{gelbart-spectrum}) as long as $v_p(4)\leq n$. In other words $K_0(n)^*$ (see \eqref{eq:g-star} for notation) defines a subgroup of $\GLt_2(F)$ as long as $F$ has odd residual characteristic.

\subsubsection{Measure considerations}\label{sec:non-arch-measures}

Let $dx$ be the Haar measure on $F$, normalised so that $\Vol(\of,dx)=1$, and $\dxy$ the Haar measure on $\Fx$, normalised so that $\Vol(\ofx,\dxy)=1$. Let $\zeta(s)=(1-q^{-s})^{-1}$ denote the (local) zeta-function of $F$. Note that $\dxy=\zeta(1)\abs{y}^{-1} dy$.

Let us normalise a right-Haar measure $d_{R}b$ on $B=ZNA$ via the formula $$d_{R}b= d^{\times}u\, dx \, \dxy$$ for $b=z(u)n(x)a(y)$ with $u,y\in\Fx$ and $x\in F$. Then $d_{L}b:= \abs{y}^{-1}d_{R}b$ determines a left-Haar measure on $B$. We denote the unique Haar probability measure on $K$ by $dk$. The group $\GL_2$ is unimodular, thus it posesses a bi-invariant Haar measure $dg$. Using the Iwasawa decomposition we can make the identification 
\begin{equation}
dg = dk d_Rb = d_Lbdk. \nonumber
\end{equation}
Note that this identification determines a canonical normalisation of $dg$.

\subsection{Global aspects of the metaplectic group}

To introduce $\GLt_{2}(\A)$ we follow the discussion in \cite[\S 2.2]{gelbart-spectrum}. We define the adele group $\GLt_{2}(\A)= \GL_2(\A)\times Z_2$ and equip it with the product 
\begin{equation}
	(g,\epsilon_1)\cdot (h,\epsilon_2) = (g\cdot h,\epsilon_1\epsilon_2\prod_v\beta_v(g_v,h_v)).\nonumber
\end{equation}
Note that if $g_{v},h_v\in K_v$ and $v$ has odd residual characteristic, then $\beta_v(g_v,h_v)=1$. Thus, by construction of the adeles as a restricted direct product we are always encountering only finite products of co-cycles. This makes our group well-defined.

Another more conceptual construction is given by quotient
$$\GLt_2(\A) = \Zt_{\eve}\backslash {\prod_v}'\GLt_2(F_v),$$
where 
\begin{equation}
	\Zt_{\eve}=\left\lbrace (\epsilon_v)_v\in {\textstyle\prod_v} Z_2\colon \epsilon_v=1 \text{ for all but a finite, even number of places }v\right\rbrace.\nonumber
\end{equation} 

In order to discretely embed $\GL_2(F)$ we have to construct another lifting other than
$
g^{*}=(g,1)\in\GLt_{2}(\A)
$
for $g\in\GL_{2}(\A)$ as given in \eqref{eq:g-star}. We define a function on $\gamma\in\GL_{2}(F)$ by setting $s(\gamma)=\prod_{v}s_{v}(\gamma)$; note that this product is indeed finite since the valuation $v(c)=0$ for almost all $v$. Now define an embedding $\varsigma\colon\GL_{2}(F)\emb\GLt_{2}(\A)$ by mapping $\gamma\in\GL_{2}(F)$ to
\begin{equation}\label{eq:diagonal-embedding}
\gamma^{\varsigma}:=(\gamma,s(\gamma))\in \GLt_{2}(\A)
\end{equation}
such that $\gamma$ is embedded diagonally into $\GL_{2}(\A)$ in the first variable. Then $\varsigma$ is in fact an injective group homomorphism.

\section{Whittaker Models, Kirillov Models and Bessel Transforms}
\label{sec:back}

We now recap some necessary prerequisites concerning local and global Whittaker models and their corresponding Bessel transforms.

\subsection{Local Whittaker and Kirillov models}

We refer to \cite[\S  3]{gelbart-ps-on-shimura}. Let $(\pi,V_{\pi})$ be an irreducible admissible genuine representation of $\GLt_2(F)$. The space $\mathcal{W}(\pi,\psi)$ of smooth functions on $\GLt_2(F)$, invariant under right translation by $\GLt_2(F)$, such that 
\begin{equation}
W(n(x)^* g) = \psi(x)W(g)\nonumber
\end{equation}
for all $x \in F$, $g\in \GLt_2(F)$, and  $W\in \mathcal{W}(\pi,\psi)$ is called a \textit{$\psi$-Whittaker model} for $\pi$ if the representation of $\GLt_2(F)$ given by right translation $R$ on $\mathcal{W}(\pi,\psi)$ is equivalent to $\pi$. Such models always exist. However, in contrast to the $\GL_2$ setting they are not necessarily unique. Therefore we call $\pi$ `distinguished' if $\mathcal{W}(\pi,\psi)$ is unique.  

To fix this lack of uniqueness we make the following definitions. A $(\psi,\mu)$-Whittaker functional on $\pi$ is a linear map $l^{\mu}\colon V_{\pi}\to\C$ such that
\begin{equation}
l^{\mu}(\pi(\zt n(x)^*)\xi) = \mu(\zt)\psi(x)l^{\mu}(\xi)\nonumber
\end{equation}
for all $x\in F$, $\zt\in \Zt$, and $\xi\in V_{\pi}$. Such functionals \textit{are} unique and for each representation $\pi$ there exists at least one $\mu\in \Omega(\omega_{\pi})$ such that $l^{\mu}\neq 0$. 
Thus, we define the non empty set $\Omega(\pi) = \{\mu\in\Omega(\omega_{\pi})\vert l^{\mu}\neq 0 \}$. Given such a $l^{\mu}$ we can construct the unique $(\psi,\mu)$-Whittaker model by defining
\begin{equation}
\mathcal{W}(\pi,\psi,\mu) = \{ W_{\xi}^{\mu}\colon g\mapsto l^{\mu}(\pi(g)\xi) \colon \xi\in V_{\pi} \}. \nonumber
\end{equation} 
In particular the elements $W\in \mathcal{W}(\pi,\psi,\mu)$ are smooth and satisfy
\begin{equation}
W(\zt n(x)^*g) = \mu(\zt)\psi(x)W(g) \nonumber
\end{equation}
for all $x\in F$, $\zt \in \Zt$, and $g\in\GLt_2(F)$. The representation of $\GLt_2(F)$ on $\mathcal{W}(\pi,\psi,\mu)$ is equivalent to $\pi$.

If $F$ is non-archimedean we have the following useful result.

\begin{lemmy}\label{lem:support-whittaker}
	On the group $A^{*}$, the Whittaker functions $W\in \mathcal{W}(\pi,\psi,\mu)$, which are right-invariant by $N(\of)^*$ satisfy $W(a(y)^{*})=0$ for all $y\in \Fx$ with $\abs{y}>1$.
\end{lemmy}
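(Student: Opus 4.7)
The plan is to mimic the well-known support argument for the Kirillov model of $\GL_2$, with one extra check that the metaplectic cocycle causes no trouble. The central computation is to swap an $n(x)^*$ past an $a(y)^*$ and then use the Whittaker transformation law together with the right $N(\of)^*$-invariance.

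First I would observe that, since $W$ is invariant under right translation by $N(\of)^*$, we have
\[
W(a(y)^* n(x)^*) = W(a(y)^*) \quad \text{for every } x\in\of.
\]
Next I would verify that inside $\GLt_2(F)$ one has the clean identity
\[
a(y)^*\, n(x)^* \;=\; n(yx)^*\, a(y)^*,
\]
with no extra sign from $Z_2$. This is exactly the point at which the argument could a priori fail: I need to compute $\beta(a(y),n(x))$ and $\beta(n(yx),a(y))$ from \eqref{eq:local-beta}. Because the lower-left entry $c$ vanishes in each of $a(y)$, $n(x)$, $n(yx)a(y) = a(y)n(x)$, the function $\ell$ returns $1$ for all three matrices and the auxiliary $s$ is identically $1$ on them. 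The Hilbert symbol in \eqref{eq:local-beta} thus reduces to $[1,1]=1$ in both cases, so neither product picks up a nontrivial element of $Z_2$. This uses nothing more than the splitting of the cover over $N$ and $A$.

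With this identity in hand, the $(\psi,\mu)$-Whittaker transformation law gives
\[
W(a(y)^* n(x)^*) \;=\; W(n(yx)^* a(y)^*) \;=\; \psi(yx)\, W(a(y)^*),
\]
so that
\[
\bigl(\psi(yx)-1\bigr)\, W(a(y)^*)\;=\;0 \qquad \text{for all } x\in\of.
\]
To conclude I would use that $\psi$ has been fixed to be unramified: $\psi|_\of=1$ but $\psi$ is nontrivial on $\pf^{-1}$, and hence on every $\pf^{-n}$ with $n\ge 1$. If $\abs{y}>1$, then $v(y)<0$ and $y\of = \pf^{v(y)}$ strictly contains $\of$, so $\psi|_{y\of}$ is nontrivial. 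Choosing $x\in\of$ with $\psi(yx)\neq 1$ forces $W(a(y)^*)=0$, as claimed.

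The only real obstacle is the cocycle verification in the second step; once that is in place, everything is identical to the classical $\GL_2$ argument (which works for any additive character and gives support in $y\of \subseteq \of$, i.e.\ $\abs{y}\le 1$). I would therefore present the lemma with the cocycle computation done explicitly, and the remainder as a short application of the unramifiedness of $\psi$.
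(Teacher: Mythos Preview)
Your proof is correct and follows essentially the same approach as the paper's: both use right $N(\of)^*$-invariance, the commutation $a(y)^* n(x)^* = n(yx)^* a(y)^*$, and the unramifiedness of $\psi$ to force $W(a(y)^*)=0$ when $\abs{y}>1$. The only difference is that you explicitly verify the cocycle is trivial in the commutation step, whereas the paper takes this for granted (the splitting over $N$ and $A$ having been noted earlier).
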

\begin{proof}
	Let $y\in F^{\times}$. Then for all $x\in\of$ we have
	\begin{equation}
	W(a(y)^{*}) = W(a(y)^{*}n(x)^*) = W(n(xy)^*a(y)^{*}) = \psi(xy)W(a(y)^{*}). \nonumber
	\end{equation}
	If $\abs{y}<1$, we can always find $x\in \of$ such that $\psi(xy) \neq 1$.
\end{proof}

Let $\mathcal{C}_0(F^{\times})$ be the image of the map $f\mapsto f\vert_{F^{\times}}$ for Schwartz functions $f\in\mathcal{S}(F)$. Then the `Kirillov map' is defined by
\begin{equation}
	\xi \mapsto (W_{\xi}^{\mu}(a(\cdot)^*))_{\mu\in \Omega(\pi)} \in \mathcal{C}_0(F^{\times})^{\sharp \Omega(\pi)}. \nonumber
\end{equation}
for $\xi\in V_{\pi}$.
As explained in \cite[\S 3]{gelbart-ps-distinguished} this map is injective. Therefore, its image is equivalent to $V_{\pi}$ and is called the `$\psi$-Kirillov model' of  $\pi$. We write $K(\pi,\psi)$ for this space. Moreover, one can show that it contains the subspace $K_0(\pi,\psi)=\oplus_{\mu\in\Omega(\pi)} \mathcal{S}(F^{\times})$.
	
For each $\mu\in \Omega(\pi)$ we can project onto the $\mu$-fibre of the $(\pi,\psi)$-Kirillov model. This leads to the function space $\{y\mapsto  W(a(y)^{*})\,:\, W\in \Wh(\pi,\psi,\mu)\}$, which contains the Schwartz--Bruhat functions $\Sch(\Fx)$. We refer to the latter space as `$(\psi,\mu)$-Kirillov space of $\pi$'. Note that the $(\psi,\mu)$-Kirillov space is not a proper model for $\pi$.

\subsection{Archimedean Bessel transforms}

In this section we introduce the Bessel transform for genuine representations of $\GLt_{2}(F)$. The goal is to describe the action of $w$ in the Kirillov model as an integral transform. We outline this theory following \cite{baruch-mao-bessel-p-adic,baruch-mao-bessel-real}.

\subsubsection{The real Bessel transform}\label{sec:bessel-real}

First consider $F=\R$. We start by quickly recalling the classification of representations in this case. 

\begin{prop}[Classification of representations of $\GLt_{2}(\R)$]\label{prop:classification-real}
Every irreducible, admissible, genuine, unitary representation $\pi$ of $\GLt_{2}(\R)$ is equivalent to
\begin{equation*}
	\pi(\mu,\sigma):=\ind_{\Zt\SLt_2(\R)}^{\GLt_2(\R)}(\mu\times\sigma)
\end{equation*}
for some genuine character $\mu$ of $\tilde{Z}$ and some genuine irreducible admissible representation $\sigma$ of $\SLt_2(\R)$ satisfying $\sigma\vert_{\Mt} = \mu\vert_{\Mt}$. We distinguish several cases:
\begin{enumerate}	
	\item The Principal (or Continuous) Series representations. Let $\sigma = \sigma_{\alpha,s}$ be the representation induced from
	\begin{equation}
		\Mt N^*(A_0^+)^*\ni  m^j n(x)^*a'(y)^* \mapsto y^{s+1}e(\frac{\alpha j}{4}) \in \C, \nonumber
	\end{equation}
	for $\alpha = 1,3$ and $s\in i\R_{>0}$. We write $$\pi(\mu,s)=\pi(\mu,\sigma_{1,s})=\pi(\sgn\cdot\mu,\sigma_{3,s}).$$ Note that
	\begin{equation}
		\pi(\mu,s)\vert_{\Zt\SLt_{2}(\R)} =  \mu\times \sigma_{1,s}\oplus (\sgn\cdot\mu) \times \sigma_{3,s}. \nonumber
	\end{equation} 
	The action of $a(-1)^*$ is easily determined by permuting the two components. Further we must have $\mu(z(-1)^*)=i$.
		
	\item The Complementary Series representations. These are given by $\sigma=\sigma_{\alpha,s}$ for $s\in\R$ with $0<s<1/2$ and $\alpha\in\{1,3\}$. As in the Principal series case we  write $\pi(\mu,s)=\pi(\mu,\sigma_{1,s})=\pi(\sgn\cdot\mu,\sigma_{3,s})$.
		
	\item The Discrete Series representations. The representation $\sigma_{1,k-\frac{1}{2}}$ contains a unitary irreducible subspace $\tilde{\sigma}_{1,k-\frac{1}{2}}$. If $k$ is even this is a holomorphic discrete series and if $k$ is odd it is a anti-holomorphic discrete series. A similar statement with holomorphic and anti-holomorphic reversed holds for $\sigma_{3,k-\frac{1}{2}}.$ We write $\sigma(\mu,k) = \pi(\mu,\tilde{\sigma}_{1,k-\frac{1}{2}}) = \pi(\sgn\cdot\mu,\tilde{\sigma}_{3,k-\frac{1}{2}})$. We can always arrange that
	\begin{equation}
		\sigma(\mu,k)\vert_{\SLt_{2}(\R)} = \begin{cases}
			\tilde{\sigma}_{1,k-\frac{1}{2}}\oplus \tilde{\sigma}_{3,k-\frac{1}{2}}&\text{ if $k$ is even,}\\
			\tilde{\sigma}_{3,k-\frac{1}{2}}\oplus \tilde{\sigma}_{1,k-\frac{1}{2}}&\text{ if $k$ is odd.} 	
		\end{cases}\nonumber
	\end{equation}
	The point is that the first component is always the holomorphic discrete series of lowest weight $k+\frac{1}{2}$. We may view this discrete series representation as a subspace of $\pi(\mu,s)$ for $s=k-\frac{1}{2}$.
		
	\item The Weil representation. If $\sigma$ is the irreducible subspace of $\sigma_{\alpha,-\frac{1}{2}}$, then $\pi(\mu,\sigma)$ is determined by a character $\chi$ of $\Rx$ and known as the $r_{\chi}^{+}$-Weil representation.
	\end{enumerate}
\end{prop}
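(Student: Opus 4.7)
My strategy is Mackey/Clifford theory applied to the index-two normal inclusion $\Zt\SLt_2(\R)\subset\GLt_2(\R)$, combined with the classical classification of genuine admissible unitary irreducibles of $\SLt_2(\R)$.

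First I would verify that $\Zt\SLt_2(\R)$ is normal of index two in $\GLt_2(\R)$, with quotient detected by the determinant modulo squares and represented by the coset of $a(-1)^*$. Next I would classify irreducible genuine representations of $\Zt\SLt_2(\R)$: since $\Zt\cap\SLt_2(\R)=\Mt$ and (by the identity $g^*z(\lambda)^*=z(\lambda)^*g^*[\lambda,\det g]$ applied to $\det g=1$) the subgroup $\Zt$ commutes with $\SLt_2(\R)$ inside $\GLt_2(\R)$, any such representation is an external tensor product $\mu\boxtimes\sigma$, where $\mu$ is a genuine character of $\Zt$ and $\sigma$ is a genuine irreducible of $\SLt_2(\R)$, subject to the central-character compatibility $\sigma|_{\Mt}=\mu|_{\Mt}$. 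For $\sigma$ itself I would invoke the classical lists (Shale, Flato--Hillion, Waldspurger; as summarised in Gelbart's book), which produce the principal, complementary, discrete series, and Weil constituents $\sigma_{\alpha,s}$, $\widetilde{\sigma}_{\alpha,k-\frac{1}{2}}$, $r_\chi^{\pm}$, indexed by $\alpha\in\{1,3\}$ recording the two possible genuine characters of $\Mt$.

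The crux is then to compute the conjugation action of $a(-1)^*$ on the pair $(\mu,\sigma)$. Using the commutation identity above with $g=a(-1)$, $\lambda=-1$, and $[-1,-1]_{\R}=-1$, one obtains
\begin{equation*}
    a(-1)^* z(-1)^* a(-1)^{*-1} = -z(-1)^* = (z(-1)^*)^{-1},
\end{equation*}
so conjugation inverts the order-four generator of $\Mt$ and hence sends the central character $\mu$ to $\sgn\cdot\mu$. Since $a(-1)^*$ centralises $N^*$ and $(A_0^+)^*$, in the induction data defining $\sigma_{\alpha,s}$ only the $\Mt$-eigenvalue flips, precisely swapping $\sigma_{1,s}\leftrightarrow\sigma_{3,s}$; an analogous analysis handles the discrete series (swapping $\widetilde{\sigma}_{1,k-\frac{1}{2}}\leftrightarrow\widetilde{\sigma}_{3,k-\frac{1}{2}}$) and the Weil cases.

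Finally, since the conjugate pair $(\sgn\cdot\mu,\sigma^{a(-1)^*})$ carries a different $\Mt$-character from $(\mu,\sigma)$, it is inequivalent, and Clifford theory for an index-two normal subgroup then forces every irreducible admissible genuine unitary $\pi$ of $\GLt_2(\R)$ to take the form $\ind_{\Zt\SLt_2(\R)}^{\GLt_2(\R)}(\mu\boxtimes\sigma)=\pi(\mu,\sigma)$. The four listed families arise by running through the four types of $\sigma$, and the explicit restriction formulae are read off from the Mackey decomposition. I expect the main obstacle to be the bookkeeping in step three: tracking the cocycle-twisted $a(-1)^*$-action on $\Mt$, matching it against $\mu\mapsto\sgn\cdot\mu$, and verifying that the swap $\alpha=1\leftrightarrow 3$ holds consistently across all four families (including the subtle case of Weil constituents inside $\sigma_{\alpha,-\frac{1}{2}}$).
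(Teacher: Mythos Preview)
The paper does not actually prove this proposition: it is introduced with ``We start by quickly recalling the classification of representations in this case'' and stated without proof, as a known classification result (drawing on Gelbart's book and the references therein). So there is no argument in the paper against which to compare your proposal.

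That said, your Mackey/Clifford approach is the natural one and is correct in outline. The key structural facts you identify---that $\Zt\SLt_2(\R)$ is normal of index two in $\GLt_2(\R)$, that $\Zt$ commutes with $\SLt_2(\R)$ so irreducibles of the product are external tensor products $\mu\boxtimes\sigma$ with the compatibility $\sigma|_{\Mt}=\mu|_{\Mt}$, and that conjugation by $a(-1)^*$ swaps $(\mu,\sigma)\leftrightarrow(\sgn\cdot\mu,\sigma^{a(-1)^*})$---are exactly what is needed. Since the conjugate has a different $\Mt$-character, the induced representation is irreducible by Clifford theory, and every genuine irreducible of $\GLt_2(\R)$ arises this way. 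The enumeration of the four families then reduces to the known classification of genuine unitary irreducibles of $\SLt_2(\R)$, which you correctly cite. Your anticipated ``main obstacle'' (the cocycle bookkeeping for the $a(-1)^*$-action) is real but routine; the computation you sketch is right.
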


Over $\R$ we have to consider $(\psi,\mu^{\pm})$-Whittaker models. Where $\mu^{\pm} = \sgn^{\frac{1\pm 1}{2}}\mu$. At least one of them will be non-zero. In particular we have the Whittaker models $\Wh(\pi(\mu,s),\psi,\mu^{\pm})$. In order to fix two standard Whittaker functionals we model $\pi(\mu,s)$ on
\begin{multline}
	\mathcal{F}_{\pi} =\bigg\{ F\in\mathcal{C}^{\infty}(\GLt_2^+(\R)\times \GLt_2(\R),\C)\colon  [h\mapsto F(h,g)]\in \mu\times\sigma \text{ for all }g\in\GLt_2(\R) \\ \text{ and }F(h_1h_2,g) = F(h_1,h_2g) \text{ for all }h_1,h_2\in\GLt_2^+(\R), \ g\in\GLt_2(\R) \bigg\}.\nonumber
\end{multline}
Set $\psi_{\lambda}(x) = e(\lambda x).$ The standard $(\psi_{\lambda},\mu^{\pm})$-Whittaker functionals (agreeing with those defined in \cite{baruch-mao-bessel-real}[\S 13]) are given by
\begin{equation}
	L^{\pm}_{\lambda}(F) = \int_{\R} F(w^*n(x)^*,a(\pm 1)^*)e(\mp\lambda p x)\abs{\lambda}^{\frac{1}{2}}dx.\nonumber
\end{equation}

We now turn towards the action of the element $w^*$ on the Kirillov model. This action is given by several Bessel transform.

\begin{prop}\label{prop:bessel-real}
Let $\pi= \pi(\mu,\sigma)$ be a genuine, irreducible representation of $\GLt_{2}(\R)$ as in Proposition \ref{prop:classification-real}. For $\epsilon\in \{\pm 1 \}$ there exist functions $j_{\pi}^{\epsilon,\pm}\colon \Rx\rightarrow\C$ such that
\begin{equation}
	W_v^{\mu^{\epsilon}}(a(x)^*w^*) = \sum_{\pm} \int_{\R^{\times}}[-x,y]\mu^{\epsilon}(y^{-1})j_{\pi}^{\epsilon,\pm}(xy)W_v^{\mu^{\pm}}(a(y)^*)d^{\times}y\nonumber
\end{equation}
whenever $W_v^{\mu^{\pm}}(a(\cdot)^*)\in \mathcal{S}(\R^{\times})$.
\end{prop}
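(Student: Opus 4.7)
The plan is to identify the Bessel kernels $j_\pi^{\epsilon,\pm}$ by studying the distribution $\xi \mapsto W_\xi^{\mu^\epsilon}(a(x)^* w^*)$ on $V_\pi$ and exploiting the uniqueness of the $(\psi,\mu)$-Whittaker functional for each genuine character $\mu$ of $\Zt$. The idea is first to show that, for each fixed $x\in\Rx$, this distribution transforms as a ``$\bar\psi$-Whittaker functional'' with respect to the opposite unipotent $\overline{N}^*$, and then to invert it against the ordinary Kirillov data using the equivalence of the $\psi$- and $\bar\psi$-Kirillov models realising the same abstract representation $\pi$.

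For the first step, I would lift the matrix identity $w\,\bar n(u) = n(-u)\,w$ to an equality $w^* \bar n(u)^* = n(-u)^* w^* \cdot \beta(u)$ in $\GLt_2(\R)$, where $\beta(u)\in Z_2$ is the explicit cocycle read off from \eqref{eq:local-beta}. Combined with the left $n(\cdot)^*$-equivariance of $W_\xi^{\mu^\epsilon}$, this produces a right $\overline{N}^*$-transformation law for $\xi\mapsto W_\xi^{\mu^\epsilon}(a(x)^*w^*)$ of the form of a (twisted) $(\bar\psi,\mu^\epsilon)$-Whittaker functional. Since such functionals are unique on each $\mu$-fibre, the distribution is determined up to scalar, and Fourier inversion on $\overline{N}(\R)\cong\R$ re-expresses the opposite Whittaker data in terms of the original Kirillov data $y\mapsto W_\xi^{\mu^\pm}(a(y)^*)$, summed over the (at most two) elements $\mu^\pm\in\Omega(\pi)$.

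Given that such an integral representation exists, I would pin down its kernel $K(x,y)$ by three constraints. First, $\Zt$-equivariance of both sides forces the prefactor $\mu^\epsilon(y^{-1})$. Second, the relation $a(x)a(y)=a(xy)$, together with the availability of test vectors with Kirillov data concentrated near any prescribed $y$, forces the remaining part of the kernel to depend on $y$ only through the product $xy$; this yields the Bessel function $j_\pi^{\epsilon,\pm}(xy)$. Third, the residual metaplectic cocycle arising when rearranging $a(x)^*$ against $\bar a(y^{-1})^*$ (needed to realise $a(x)^* w^*$ in terms of $a(y)^*$) assembles into the quadratic Hilbert symbol $[-x,y]$ via its bilinearity, producing exactly the factor appearing in the statement.

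The principal obstacle is the bookkeeping of metaplectic cocycles. Unlike the $\GL_2$ setting where the analogous Bessel formula is a routine Mellin or Fourier manipulation, in $\GLt_2(\R)$ each rearrangement of matrix factors generates a $Z_2$-valued correction; showing that the net contribution collapses precisely into $[-x,y]$ rather than a spurious extra sign is the technical heart of the argument. Once this is carried out, the existence of $j_\pi^{\epsilon,\pm}$ is formal, while its explicit description case-by-case according to Proposition \ref{prop:classification-real} is the subject of the computations of Baruch--Mao in \cite{baruch-mao-bessel-real}, to which one would defer for the evaluation of each kernel.
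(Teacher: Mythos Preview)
The paper's own proof is a one-line citation: it invokes \cite[Theorem~13.2]{baruch-mao-bessel-real} directly for the existence of the Bessel kernels, and then performs the single cocycle computation
\[
[-x,y]\,\mu^{\epsilon}(y^{-1})\,j_{\pi}^{\mu^{\epsilon},\mu'}(a(xy)^*w^*) = j_{\pi}^{\mu^{\epsilon},\mu'}(a(y)^*w^*a(x^{-1})^*)
\]
to put Baruch--Mao's formula into the shape stated. Your proposal takes a different and more ambitious route: you try to re-derive the structural form of the Bessel transform from scratch via uniqueness of $(\psi,\mu)$-Whittaker functionals, deferring to Baruch--Mao only for the explicit evaluation of the kernels.

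Your heuristic is sound as far as it goes---the functional $\xi\mapsto W_\xi^{\mu^\epsilon}(a(x)^*w^*)$ does satisfy a $(\overline{N}^*,\Zt)$-equivariance law, and uniqueness then pins it down up to a scalar depending on $x$. The cocycle bookkeeping you outline (producing $[-x,y]$ and the dependence on $xy$ only) is exactly the computation the paper records. However, there is a genuine gap in your claim that ``the existence of $j_\pi^{\epsilon,\pm}$ is formal.'' What is formal is the existence of a \emph{distribution} on the Kirillov space realising this functional; the assertion that this distribution is given by integration against an honest \emph{function} $j_\pi^{\epsilon,\pm}\colon\Rx\to\C$, with the integral converging absolutely for Schwartz data, is precisely the analytic content of \cite{baruch-mao-bessel-real}. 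Your ``Fourier inversion on $\overline{N}(\R)$'' step does not supply this: relating the $\overline{N}$-Whittaker data back to the $N$-Kirillov data is equivalent to describing the action of $w^*$, which is what you are trying to establish. So either you cite Baruch--Mao for the regularity of the kernel (at which point you may as well cite them for the whole statement, as the paper does), or you must reproduce their analysis. The paper chose the former.
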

\begin{proof}
	This follows from \cite[Theorem~13.2]{baruch-mao-bessel-real} and the identification $$[-x,y]\mu^{\epsilon}(y^{-1})j_{\pi}^{\mu^{\epsilon},\mu'}(xy) = [-x,y]\mu^{\epsilon}(y^{-1})j_{\pi}^{\mu^{\epsilon},\mu'}(a(xy)^*w^*) = j_{\pi}^{\mu^{\epsilon},\mu'}(a(y)^*w^*a(x^{-1})^*).$$
\end{proof}
Note that in contrast to the $\GL_2$ situation we obtain a linear combination of two Bessel transforms. This is a reflection of the non-uniqueness of the Whittaker model and can be directly seen from the more complicated structure of the Kirillov model. Moreover, if $\mu\neq \mu'$, then the Bessel function $j_{\pi}^{\mu,\mu'}$ depends on the normalisation of the two Whittaker functionals $L^{\pm}$. Otherwise the Bessel function depends only on the measure normalisations.

We finish this section by listing explicit formulae for these functions in some cases. We will consider the additive character $\psi(x) = \psi_{\lambda}(x)= e(\lambda x)$ and write $j_{\pi,\lambda}^{\epsilon,\pm}(x)$ to highlight the additional dependence. The following expressions are extracted from \cite[p.53]{baruch-mao-bessel-real}.

For \textit{discrete series representations}, $s \in -\frac{1}{2}+\N$, we have $j_{\pi,\lambda}^{\pm,\mp}(x)=0$ and $j_{\pi,\lambda}^{-,-}(x)=j_{\pi,-\lambda}^{+,+}(x)$. Therefore, it is enough to describe the $++$-case. 
We have 
\begin{equation}
j_{\pi,\lambda}^{+,+}(x)= \begin{cases}
2\frac{\pi e^{-3\pi i s/2}}{\sin(\pi s)}\mu(\sqrt{\abs{x}})\abs{x\lambda}^{\frac{1}{2}}J_s(4\pi\abs{\lambda}\sqrt{\abs{x}}) &\text{ if }x,\lambda>0, \\	
0 &\text{ else.}
\end{cases} \label{eq:holo_bessel}
\end{equation}

For \textit{principal series representations}, $s\in i \R$, and \textit{complementary series}, $0<s<\frac{1}{2}$, we have
\begin{align}
j_{\pi,\lambda}^{+,+}(x) &= 
\delta_{x>0}2\mu(\abs{x}^{\frac{1}{2}})\abs{\lambda x}^{\frac{1}{2}} [K_s(\lambda\cdot 4\pi i \abs{x}^{\frac{1}{2}})-iK_s(-\lambda\cdot 4\pi i \abs{x}^{\frac{1}{2}})], \nonumber \\
j_{\pi,\lambda}^{+,-}(x) &= 
\delta_{x>0}2\mu(\abs{x}^{\frac{1}{2}})\abs{\lambda x}^{\frac{1}{2}}[(\sgn(\lambda)i)^{-s-1}-i(-\sgn(\lambda)i)^{-s-1}]K_s(4\pi\abs{\lambda^2 x}^{\frac{1}{2}}) \nonumber
\end{align}
where we consider the situation $\pi=\pi(\mu,\sigma_{1,s})$. In particular we have $\mu(-1)=i$. The other Bessel transforms are obtained by the relations $j_{\pi,\lambda}^{-,-}(x) = j_{\pi,-\lambda}^{+,+}(x)$ and $j_{\pi,\lambda}^{-,+}(x)=j_{\pi,-\lambda}^{+,-}(x)$.

\begin{rem}
We skip the case of the even Weil representation, as this can be easily excluded in most global applications.
\end{rem}

\subsubsection{The complex Bessel transform}\label{sec:bessel-complex}

If $F=\C$, we are in the split situation where $\GLt_{2}(\C) = \GL_2(\C)\times Z_2$. Thus, every genuine, irreducible, unitary representation is given by
\begin{equation}\label{eq:complex-representation}
\tilde{\pi}((g,\epsilon))v=\epsilon \pi(g)v, \nonumber
\end{equation}
where $\pi$ is an irreducible, unitary representation of $\GL_2(\C)$ and $v\in\pi$.  The latter representations have been classified in \cite[Theorem~6.2]{j-l-gl2}. Also the complex Bessel transform reduces to the $\GL_2(\C)$ case, which has been studied in \cite{qi-fundamental-theory-bessel}.

\subsection{Bessel transforms over non-archimedean fields}

Let $F$ denote a non-archimedean local field and fix an irreducible, admissible representation $(\pi,V_{\pi})$ of $\GLt_2(F)$. For $\mu\in \Omega(\omega_{\pi})$, fix a $(\psi,\mu)$-Whittaker functional for $\pi$ and call it $l^{\mu}$. By \cite[Lemma 7.1]{baruch-mao-bessel-p-adic}, the $(\psi,\mu)$-Whittaker space may be decomposed as
\begin{equation}
\Wh(\pi,\psi,\mu) = \bigoplus_{\mu'\in\Omega(\omega_{\pi})} \Wh_{\mu'}(\pi,\psi,\mu), \nonumber 
\end{equation}
where we define the space $\Wh_{\mu'}(\pi,\psi,\mu)$ to be the set of functions $W\in \Wh(\pi,\psi,\mu)$ satisfying $W(g\zt)= \mu'(\zt)W(g)$ for all $g\in\GLt_2(F)$ and $\zt\in\Zt$.

For $W\in \Wh(\pi,\psi, \mu)$ and $\mu'\in\Omega(\omega_{\pi})$ we define the function
\begin{equation} 
J_{n}(W,\mu';g) :=  \, \int_{\pf^{-n}}\int_{\Zt/\Zt^{2}} \,W(gn(x)^*\zt)\,\psi(x)^{-1}\,\mu'(\zt)^{-1}\,d\zt\, dx. \nonumber
\end{equation}
By \cite[Theorem 8.2]{baruch-mao-bessel-p-adic} the sequence stabilises as $n\to \infty$. Thus, 
\begin{equation}\label{eq:bessel-J-def}
J(W,\mu';g) = \lim_{n\to \infty} J_{n}(W,\mu';g)
\end{equation}
converges for each $g\in \Bt w^{*}N^{*}$, is locally constant on this set, and defines a locally integrable function on $\GLt_2(F)$. 

Note that  by construction $J(W_{\xi}^{\mu},\mu';g)$ gives rise to a $(\psi,\mu')$-Whittaker functional on $\GLt_2(F)$. By the uniqueness property for such functionals we conclude that there exists a value $j_{\pi}^{\mu,\mu'}(g)\in\C$ such that
\begin{equation}\label{eq:bessel-little-j-def}
J(W_{\xi}^{\mu},\mu';g) = j_{\pi}^{\mu,\mu'}(g)\, W_\xi^{\mu'}(1)
\end{equation}
for each $g\in\Bt w^*N^*$. The consequent function $ j_{\pi,\lambda}^{\mu,\mu'}(g)$ also depends on the fixed choice of character $\psi$; Whittaker functional $l^{\mu'}$; and Haar measure $dx$ present in \eqref{eq:bessel-J-def}, though is independent of the vector $\xi$. If $l^{\mu'}=0$ simply define $j_{\pi}^{\mu,\mu'}(g)=0$ identically. 

We call the function $j_{\pi}^{\mu,\mu'}$ the `$(\mu,\mu')$-Bessel function' associated to $\pi$. We are interested in these functions because they can be used to evaluate the Whittaker functions $W_{\xi}^{\mu}$ away from the diagonal. By \cite[Proposition~8.3]{baruch-mao-bessel-p-adic} the Bessel transform is given as follows (noting that a factor of $\zeta_F(1)^{-1}$ appears because our measure is normalised differently).

\begin{prop}\label{prop:bessel-padic}
	Let $\xi\in V_{\pi}$ such that for all $\mu'\in \Omega(\omega_{\pi})$ the function $y\mapsto W_\xi^{\mu'}(a(y)^{*})$ has compact support in $\Fx$. Then for each $g\in\Bt w^*N^*$ we have
	\begin{equation}
	W_\xi^{\mu}(g) = \zeta_F(1)^{-1}\sum_{\mu'\in\Omega(\omega_{\pi})}\int_{F^{\times}}j_{\pi}^{\mu,\mu'}(ga(y^{-1})^*)\,W_{\xi}^{\mu'}(a(y)^{*})\,\dxy.\nonumber
	\end{equation}
\end{prop}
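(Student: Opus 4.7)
The strategy is to reduce the formula to the defining relation \eqref{eq:bessel-little-j-def} of the Bessel function via a translation trick, then collapse the result by Fourier inversion on $F$. For each $y\in F^{\times}$ introduce the shifted vector $\xi_{y}:=\pi(a(y)^{*})\xi$. One verifies directly $W_{\xi_{y}}^{\mu'}(1)=W_{\xi}^{\mu'}(a(y)^{*})$ and $W_{\xi_{y}}^{\mu}(h)=W_{\xi}^{\mu}(h\,a(y)^{*})$ for all $h$. Substituting $\xi_{y}$ for $\xi$ and $g\,a(y^{-1})^{*}$ for $g$ in \eqref{eq:bessel-little-j-def} yields
\[
j_{\pi}^{\mu,\mu'}(g\,a(y^{-1})^{*})\,W_{\xi}^{\mu'}(a(y)^{*})\;=\;J\bigl(W_{\xi_{y}}^{\mu},\mu';\,g\,a(y^{-1})^{*}\bigr),
\]
so the right-hand side of the proposition equals $\zeta_{F}(1)^{-1}\sum_{\mu'}\int_{F^{\times}}J(W_{\xi_{y}}^{\mu},\mu';g\,a(y^{-1})^{*})\,\dxy$, converting the Bessel data into a concrete stabilised integral.

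\textbf{Cocycle step.} Expand $J$ via \eqref{eq:bessel-J-def}. Using $a(y^{-1})^{*}n(x)^{*}=n(y^{-1}x)^{*}a(y^{-1})^{*}$ (the relevant metaplectic cocycle being trivial) together with the key commutation relation $a(y^{-1})^{*}\zt\,a(y)^{*}=[\lambda,y]\,\zt$ for $\zt=z(\lambda)^{*}\epsilon\in\Zt$, the integrand becomes $[\lambda,y]\,W_{\xi}^{\mu}(g\,n(y^{-1}x)^{*}\zt)\,\psi(-x)\,\mu'(\zt)^{-1}$. The Hilbert-symbol $\zt\mapsto[\lambda,y]$ defines a quadratic character of $\Zt/\Zt^{2}$ for each $y$, and the twist $\mu'\mapsto \mu'\cdot[\cdot,y]^{-1}$ permutes $\Omega(\omega_{\pi})$; summing over $\mu'$ therefore gives $\sum_{\mu''\in\Omega(\omega_{\pi})}\mu''(\zt)^{-1}=\#\Omega(\omega_{\pi})\cdot\mathbf{1}_{\zt=1}$ by character orthogonality. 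Combined with the normalised counting measure on $\Zt/\Zt^{2}$ defined in \S \ref{sec:the-centre}, this collapses the $\zt$-integration to evaluation at $\zt=1$, leaving simply $W_{\xi}^{\mu}(g\,n(y^{-1}x)^{*})\,\psi(-x)$.

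\textbf{Fourier inversion and obstacle.} Change variables $x=yu$; the Jacobian $\abs{y}$ combines with $\dxy$ to give $\zeta_{F}(1)\,dy$. Swapping the $y$-integration with the $u$-integration produces the Fourier pair $\int_{F}dy\int_{F}W_{\xi}^{\mu}(g\,n(u)^{*})\,\psi(-yu)\,du$. By Fourier inversion on $F$ (with the self-dual measure determined by the unramified $\psi$), the $y$-integration concentrates at $u=0$, evaluating the whole expression to $W_{\xi}^{\mu}(g\,n(0)^{*})=W_{\xi}^{\mu}(g)$, and the factor $\zeta_{F}(1)^{-1}$ in the statement precisely cancels the $\zeta_{F}(1)$ from the measure change. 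The principal technical hurdle is the cocycle bookkeeping: tracking how $[\lambda,y]$ emerges from $a(y^{-1})^{*}\zt\,a(y)^{*}$ and verifying that it absorbs coherently into the $\mu'$-sum by orthogonality, which is exactly the mechanism by which the non-uniqueness of the $(\psi,\mu)$-Whittaker model is reconciled with a classical Fourier-analytic identity on $F$. The interchanges of limits and integrals, and the applicability of Fourier inversion, are routine thanks to the compact-support hypothesis on the Kirillov data, which ensures $u\mapsto W_{\xi}^{\mu}(g\,n(u)^{*})$ is a Schwartz--Bruhat function on $F$.
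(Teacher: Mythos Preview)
Your argument is essentially correct, and it is worth noting that the paper itself does not give a proof at all: it simply invokes \cite[Proposition~8.3]{baruch-mao-bessel-p-adic} and remarks on the measure normalisation. So you are supplying a genuinely self-contained argument where the paper defers to the literature. The cocycle manipulation, the observation that $\mu'\mapsto [\cdot,y]\mu'$ permutes $\Omega(\omega_{\pi})$, and the resulting collapse of the $\zt$-integral by orthogonality are all clean and correct (with the tiny caveat that the identity $\sum_{\mu''}\mu''(\zt)^{-1}=\#\Omega(\omega_{\pi})\mathbf{1}_{\zt=1}$ is only literally true on the representatives $z(\lambda)^{*}$, not on the $\epsilon$-component, but the normalised measure in \S\ref{sec:the-centre} only sees those representatives anyway).

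The one point that deserves more than the word ``routine'' is your final claim that $u\mapsto W_{\xi}^{\mu}(g\,n(u)^{*})$ is Schwartz--Bruhat. This is true, but it does not follow \emph{immediately} from the compact support of the Kirillov data: local constancy is clear from smoothness of $\xi$, but compact support in $u$ requires an argument. Writing $g=b\,w^{*}n(v)^{*}$ and reducing to $g=w^{*}$, one uses the decomposition $w\,n(u)=n(-u^{-1})z(-u)a(u^{-2})\overline{n}(u^{-1})$ for $u\neq 0$: for $|u|$ large, $\overline{n}(u^{-1})^{*}$ fixes $\xi$ by smoothness, and then $W_{\xi}^{\mu}(a(u^{-2})^{*})=0$ by the compact-support hypothesis. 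That two-line check is what actually makes the stabilised integral coincide with an honest Fourier transform and legitimises the inversion; it would strengthen the write-up to include it.
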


The left- and right-translates by $\Zt N^{*}$ of the $(\mu,\mu')$-Bessel function are easily described. Thus, by the Bruhat decomposition for $\GL_2(F)$, it suffices to understand the Bessel functions on elements $(a(y),\epsilon)w^{*}\in\Gt$ for $y\in\Fx$ and $\epsilon\in Z_{2}$. Abusing notation, we define
\begin{equation}
j_{\pi}^{\mu,\mu'}(y,\epsilon) := j_{\pi}^{\mu,\mu'}((a(y),\epsilon)w^*); \quad j_{\pi}^{\mu,\mu'}(y) := j_{\pi}^{\mu,\mu'}(y,1). \nonumber
\end{equation}
These functions immediately satisfy the following support condition.

\begin{lemmy}\label{lem:bessel-supp-cond}
If $y\in \Fx$ then $j_{\pi}^{\mu,\mu'}(y)=0$ unless $\mu'(u) = \,[u,y]\mu(u)$ for all $u\in\Fx$.
\end{lemmy}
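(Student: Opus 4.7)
The strategy is to exploit the transformation behaviour of the Bessel function $j_{\pi}^{\mu,\mu'}$ under left and right translation by the centre $\Zt$, combined with a commutator identity relating $a(y)^{*}w^{*}$ and $z(u)^{*}$ in $\GLt_{2}(F)$. The whole argument is a one-line cancellation once these ingredients are in place.

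First I would record two equivariance properties for $j_{\pi}^{\mu,\mu'}$. The left equivariance $j_{\pi}^{\mu,\mu'}(\zt g) = \mu(\zt)\, j_{\pi}^{\mu,\mu'}(g)$ follows at once from the definition \eqref{eq:bessel-J-def}--\eqref{eq:bessel-little-j-def} together with the Whittaker relation $W(\zt g) = \mu(\zt)W(g)$ satisfied by every $W \in \Wh(\pi,\psi,\mu)$. For the right equivariance I use that $\Zt$ commutes with $N^{*}$ in $\GLt_{2}(F)$, since the cocycle contribution is $[u,\det n(x)] = [u,1] = 1$. Sliding $\zt_{0}$ past $n(x)^{*}$ and changing variables $\zt \mapsto \zt_{0}^{-1}\zt$ in the invariant counting integral over the finite group $\Zt/\Zt^{2}$ gives
\begin{equation*}
J(W_{\xi}^{\mu},\mu'; g\zt_{0}) = \mu'(\zt_{0})\, J(W_{\xi}^{\mu},\mu'; g),
\end{equation*}
so that $j_{\pi}^{\mu,\mu'}(g\zt_{0}) = \mu'(\zt_{0})\, j_{\pi}^{\mu,\mu'}(g)$.

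Next I would derive the commutator identity. Using the formula $g^{*}z(\lambda)^{*} = z(\lambda)^{*} g^{*} [\lambda,\det g]$ from \S \ref{sec:the-centre}, the identities $\det w = 1$ and $\det a(y) = y$ give
\begin{equation*}
a(y)^{*} w^{*} z(u)^{*} \;=\; a(y)^{*} z(u)^{*} w^{*} \;=\; z(u)^{*} a(y)^{*} w^{*} \cdot [u,y],
\end{equation*}
where $[u,y]$ is understood as $(1,[u,y]) \in Z_{2} \subset \GLt_{2}(F)$.

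Finally, I would apply $j_{\pi}^{\mu,\mu'}$ to both sides of this identity. The left-hand side equals $\mu'(z(u)^{*})\, j_{\pi}^{\mu,\mu'}(a(y)^{*}w^{*})$ by right equivariance, while the right-hand side equals $[u,y]\,\mu(z(u)^{*})\, j_{\pi}^{\mu,\mu'}(a(y)^{*}w^{*})$ by combining left equivariance in $z(u)^{*}$ with the genuineness of $\mu'$ (so that $\mu'((1,[u,y])) = [u,y]$). With the notational convention $\mu(u) := \mu(z(u)^{*})$, cancellation forces either $j_{\pi}^{\mu,\mu'}(y) = 0$ or $\mu'(u) = [u,y]\mu(u)$ for every $u \in \Fx$, which is the claimed dichotomy. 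The only subtle point is the careful bookkeeping of the genuine character normalisations and the $[u,y]$ cocycle factor; there is no real analytic obstacle.
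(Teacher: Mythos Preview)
Your proof is correct and follows essentially the same approach as the paper's: both arguments exploit the left $\mu$- and right $\mu'$-equivariance of $J(W,\mu';\cdot)$ under $\Zt$ together with the commutation identity $a(y)^{*}w^{*}z(u)^{*} = z(u)^{*}a(y)^{*}w^{*}(1,[u,y])$ to force the relation $\mu'(u)\,j_{\pi}^{\mu,\mu'}(y) = [u,y]\,\mu(u)\,j_{\pi}^{\mu,\mu'}(y)$. The paper compresses all of this into the phrase ``by reordering integration in \eqref{eq:bessel-J-def}'', but your more explicit treatment of the two equivariances and the cocycle bookkeeping is accurate and matches the paper's logic exactly.
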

\begin{proof}
By \eqref{eq:bessel-little-j-def} it suffices to consider the function of $J(W,\mu';w^{*}a(y)^{*})$ for some $W\in\Wh(\pi,\psi,\mu)$. By reordering integration in \eqref{eq:bessel-J-def}, for each $u\in\Fx$ we obtain
\begin{equation}
	j_{\pi}^{\mu,\mu'}(w^{*}a(y)^{*})\mu'(u) =  [u,y]\, \mu(u)\, j_{\pi}^{\mu,\mu'}(w^{*}a(y)^{*}), \nonumber
\end{equation}
whence the claim follows.
\end{proof}

\begin{rem}
If $\pi$ is distinguished, then there is a unique (genuine) character $\mu_{\pi}$ satisfying $l^{\mu_{\pi}} \neq 0$. Therefore, the only non-trivial Bessel function is $j_{\pi}^{\mu_{\pi},\mu_{\pi}}$. This function does not depend on the choice of $l^{\mu_{\chi}}$ but only upon the choice of Haar measures in \eqref{eq:bessel-J-def}. An explicit formula for this Bessel function is given in \cite[Proposition~4.4.2]{gelbart-ps-distinguished}. Even more interesting is that for such distinguished representations there is a local functional equation, which closely mimics the $\GL_2(F)$ analogue. Indeed, there exist constants $\gamma(s,\pi,\psi)\in\Cx$, for $s\in \C$, known explicitly in terms of local $L$- and $\varepsilon$-factors, which are shown to equal
\begin{equation}\label{eq:distinguished-gamma}
\gamma(s,\pi,\psi) = \int_{F^{\times}}j_{\pi}^{\mu_{\pi},\mu_{\pi}}(y)\mu_{\pi}(y)^{-1}\abs{y}^{\frac{1}{2}-s}d^{\times}y.
\end{equation}
(See \cite[p.~170]{gelbart-ps-distinguished}.) 

For non-distinguished representations $\pi$ such a compact formula relating the Bessel functions $j_{\pi}^{\mu,\mu'}$ to some $\gamma$-factors is not available. However, adapting the proof of \cite[Lemma~4.5]{soudry-the-L-and-gamma} one arrives at
\begin{eqnarray}
\gamma(s,\pi,\pi',\psi,\mu) &=& \zeta_F(1)^{-2}\int_{F^{\times}}\psi(u)\mu\mu_{\pi'}(u)^{-1}\abs{u}^{2(1-s)}\nonumber \\
&&\qquad\qquad\cdot \int_{u^{-2}\pf^{-n}}j_{\pi}^{\mu,\mu}(y)j_{\pi'}^{\mu_{\pi'},\mu_{\pi'}}(y)\mu\mu_{\pi'}(y)^{-1}\abs{y}^{-s} d^{\times}yd^{\times}u,\nonumber
\end{eqnarray}
for $n\gg 1$ and a distinguished representation $\pi'$. Here the $\gamma$-factors appearing on the right come from a local functional equation of \textit{Shimura-type}, see \cite[Theorem 5.3]{gelbart-ps-on-shimura}, and play a key role in the representation theoretic description of the Shimura correspondence.

\end{rem}

\subsection{Global aspects}

Let $\pi$ be a genuine cuspidal automorphic representation of $\GLt_2(\A)$. These are irreducible constituents of the space of genuine cuspidal automorphic forms $\mathcal{A}^{\circ}(\GLt_2(\A))$ on which $\GLt_2(\A)$ acts by right translation; see \cite{gelbart-ps-on-shimura}[\S 9.3]. We write $\omega_{\pi}$ for its central character. It can be shown that each genuine irreducible admissible representation $\pi$ of $\GLt_2(\A)$ has a factorisation
\begin{equation}
	\pi = \bigotimes_v \pi_v, \nonumber
\end{equation}
where $\pi_v$ are genuine irreducible admissible representations of $\GLt_2(F_v)$, which are spherical for almost all $v$. Note that the tensor product is actually a representation of ${\prod_v}'\GLt_2(F_v)$ but, since all the local representations are genuine, the action of $\Zt_{\eve}$ is trivial so that the product descends to a representation of $\GLt_2(\A)$ viewed as a quotient. 

Let $\psi=\otimes_v \psi_v$ be an additive character of $\A$ which is invariant under $F$. We further assume that all the local components $\psi_v$ are unramified. Using Fourier analysis on the compact quotient $F\backslash \A$, one finds that any genuine cuspidal automorphic form $\phi$ has a Fourier, or Whittaker, expansion
\begin{equation}
	\phi(g) = \sum_{\xi\in F^{\times}}W_{\phi}(a(\xi)^*g) \label{eq:whitt_exp}
\end{equation}
for $$W_{\phi}(g) = \int_{F\backslash \A} \phi(n(x)^*g)\overline{\psi(x)}dx.$$ This is just as in the $\GL_2$ situation. For obvious reasons we call $W_{\phi}$ the $\psi$-Whittaker function of $\phi$. Guided by the local situation we define the set $\Omega(\omega_{\pi})$ as the set of genuine characters $\mu\colon \Zt(\A)\to\Cx$ such that $\mu^2=\omega_{\pi}$. Furthermore we define the $(\psi,\mu)$-Whittaker function of $\phi$ by
\begin{equation}
	W_{\phi}^{\mu}(g) = \int_{\Zt(\A)^2\backslash \Zt(\A)}\int_{F\backslash \A} \phi(zn(x)^*g)\overline{\psi(x)\mu(z)}dxdz.\nonumber
\end{equation}
Of course we have
\begin{equation}
	W_{\phi}(g) = \sum_{\mu\in\Omega(\omega_{\pi})}W_{\phi}^{\mu}(g).\nonumber
\end{equation}
We write $\mu^{\delta}(z) = \mu((a(\delta)^{-1})^*za(\delta)^*)$ and obtain the refined Whittaker expansion
\begin{equation}
	\phi(g) = \sum_{\mu\in\Omega(\omega_{\pi})}\sum_{\xi\in F^{\times}}W_{\phi}^{\mu^{\xi}}(a(\xi)^*g). \label{eq:ref_whitt_exp}
\end{equation}

As we did in the local case, we can associate the global $(\psi,\mu)$-Whittaker model $\Wh(\pi,\psi,\mu)$ to $\pi$. We define $\Omega(\pi)\subset\Omega(\omega_{\pi})$ to consist of those $\mu$ for which $\Wh(\pi,\psi,\mu)$ exists and is non-trivial. We call $\pi$ distinguished if $\sharp\Omega(\pi)=1$.

\begin{rem}
Let $\pi =\otimes_v \pi_v$ be a genuine cuspidal automorphic representation of $\GLt_{2}(\A)$. Then the following are equivalent:
\begin{itemize}
	\item $\pi$ is distinguished.
	\item $\pi$ is a theta representation. In other words, it is constructed from a global Weil representation.
	\item $\pi_v$ is distinguished for every $v$.
\end{itemize} 
This is the content of \cite[Theorem A]{gelbart-ps-on-shimura}. Furthermore if $\phi$ transforms with respect to a distinguished cuspidal automorphic  representation $\pi$ with $\Omega(\pi)=\{\mu\}$, then \eqref{eq:ref_whitt_exp} reads
\begin{equation}
	\phi(g)=\sum_{\xi\in F^{\times}}W_{\phi}^{\mu^{\xi}}(a(\xi)^*g) \nonumber
\end{equation}
and we have $W_{\phi}(g)=W_{\phi}^{\mu}(g)$ for all $g$.
\end{rem}

We have now gathered all the necessary representation theoretic background and turn towards the derivation of our main Vorono\"i formula.

\section{The main adelic theorem}
\label{sec:main}

As suggested in the introduction, we follow the strategy for $\GL_2$ as outlined in \S \ref{sec:gltwo-revisited}. For distinguished (cuspidal) automorphic representations $\pi$ this can be executed verbatim. However, such a result is not so interesting as it is essentially just a reformulation of Poisson summation. Therefore our focus will be mainly on non-distinguished representations. For such representations, the fact that $N^*\subset \GLt_2$ is not a strong Gelfand pair becomes apparent. We instead consider the following Gelfand formation:
\[
\begin{tikzcd}
&\GLt_2 \arrow[dash]{rd} \arrow[dash]{ld} &  \\
\Zt N^* \arrow[dash]{rd} & & \Zt\overline{N}^* \arrow[dash]{ld}\\
& \{ 1 \}^* & 
\end{tikzcd}
\]
Note that $\Zt N^*\subset \GLt_2$ is a strong Gelfand pair by uniqueness of $(\psi,\mu)$-Whittaker functionals. Furthermore we have $w^*\Zt N^* (w^*)^{-1} = \Zt\overline{N}$. This follows from the elementary computation
\begin{equation}
	w^* z n(x) (w^*)^{-1} = z\overline{n}(-x), \text{ for }z\in\Zt \text{ and } x\in \A. \nonumber
\end{equation}
Note that
\begin{equation*}
w^{\varsigma}=(w, 1)\in \GL_{2}(F)^{\varsigma}\subset\GLt_{2}(\A).
\end{equation*}
We now give the fundamental identity by expanding the period $\phi\mapsto \phi(e)$ through both sides of the formation.
Equivalently, the fundamental identity may be viewed as the Fourier expansion of both sides of the equality $\phi(1) = \phi(w^{\iota})$, which holds for example for automorphic forms $\phi$. This is the point of view we adopt in the following proof.

\begin{prop}\label{prop:fundamental-identity}
Let $\phi$ be a cuspidal automorphic form transforming with respect to some cuspidal automorphic representation  $\pi=\otimes_{v}\pi_{v}$ of $\GLt_{2}(\A)$. Further assume that $\phi$ is right-$N(\hat{\of})^{*}$-invariant. Let $\zeta=(\zeta_{v})\in \A$ and let $S$ denote the set of places containing the archimedean places $v$ for which $\zeta_{v}\neq 0$ and the non-archimedean places $v$ for which $\abs{\zeta_{v}}_{v}>1$.
Let $\psi=\otimes_{v}\psi_{v}$ be an additive character of $\A/F$.
Then we have
\begin{equation*}
	\sum_{\mu\in\Omega(\omega_{\pi})}\sum_{\xi\in\Fx}   W_{\varphi}^{\mu^{\xi}}(a(\xi)^{*})\, \psi(\xi\zeta)=  \sum_{\mu\in\Omega(\omega_{\pi})}\sum_{\xi\in\Fx}  W_{\phi}^{\mu^{\xi}}(a(\xi)^{*}w^{\varsigma} h(\zeta))\,\prod_{v\in S}
	\psi_{v}(-\xi\zeta_{v}^{-1})
\end{equation*}
where $h(\zeta)=(h_{v})\in \GLt_{2}(\A)$ is given by
\begin{equation*}
	h_{v}=\begin{pmatrix}
	1&\zeta_{v}\\
	-\zeta_{v}^{-1}&0
	\end{pmatrix}^{*}
\end{equation*}
for $v\in S$ and $h_{v}=1$ for $v\not\in S$.
\end{prop}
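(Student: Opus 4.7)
The plan is to derive the claim by inserting $g = n(\zeta)^*$ into the refined Whittaker expansion \eqref{eq:ref_whitt_exp} and separately at $g = w^\varsigma n(\zeta)^*$, then comparing via the identity
\[
	\phi(n(\zeta)^*) = \phi(w^\varsigma n(\zeta)^*),
\]
which holds by automorphy, since $w \in \GL_2(F)$ so $w^\varsigma \in \GL_2(F)^\varsigma$ stabilises $\phi$ from the left.

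For the left-hand side, inspection of \eqref{eq:local-beta} using that $cd=0$ in each argument gives $\beta_v(a(\xi), n(\zeta_v)) = 1$ at every place, so $a(\xi)^* n(\zeta)^* = n(\xi\zeta)^* a(\xi)^*$ exactly in $\GLt_2(\A)$; the Whittaker transformation law then yields $W^{\mu^\xi}_\phi(a(\xi)^* n(\zeta)^*) = \psi(\xi\zeta) W^{\mu^\xi}_\phi(a(\xi)^*)$, which matches the stated left-hand side upon summation.

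For the right-hand side, the central ingredient is the elementary $\GL_2(F_v)$ matrix identity $w\, n(\zeta_v) = n(-\zeta_v^{-1})\, w\, h_v$ for $\zeta_v \in F_v^\times$, verified by a direct calculation. Combining these across $v \in S$ and premultiplying by $a(\xi)$ assembles the $\GL_2(\A)$ identity
\[
	a(\xi)\cdot w\cdot n(\zeta) \;=\; n(-\xi\zeta_S^{-1})\cdot a(\xi)\cdot w\cdot h(\zeta)\cdot n(\zeta_{S^c}),
\]
where $\zeta_S^{-1}$ denotes the adele with $v$-component $\zeta_v^{-1}$ at $v \in S$ and zero elsewhere, while $\zeta_{S^c}$ is the projection of $\zeta$ to the places outside $S$ (so that $\zeta_{S^c} \in \hat{\of}$ by the definition of $S$). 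The main obstacle is to promote this to an identity in $\GLt_2(\A)$, rather than merely one up to a sign in $Z_2$. A place-by-place computation from \eqref{eq:local-beta}, using that in each pair being multiplied at least one factor lies in a subgroup where the cover splits and $\beta$ is trivial (namely $N^*$ or $B^*$), reduces the non-trivial cocycle contribution to a global product of Hilbert symbols of the form $\prod_v [-1,\xi]_v$, which equals $1$ by the product formula.

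Granting this lift, apply $W^{\mu^\xi}_\phi$ to the resulting $\GLt_2(\A)$ identity: the left translation by $n(-\xi\zeta_S^{-1})^*$ produces the phase $\psi(-\xi\zeta_S^{-1}) = \prod_{v \in S}\psi_v(-\xi\zeta_v^{-1})$ via the Whittaker transformation, while the right translation by $n(\zeta_{S^c})^* \in N(\hat{\of})^*$ is absorbed by the hypothesis of right-$N(\hat{\of})^*$-invariance. Summing over $\xi \in F^\times$ and $\mu \in \Omega(\omega_\pi)$ in \eqref{eq:ref_whitt_exp} at $g = w^\varsigma n(\zeta)^*$ then delivers the right-hand side of the claim.
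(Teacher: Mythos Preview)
Your argument is correct and follows the same route as the paper: compare the refined Whittaker expansions of $\phi$ at $n(\zeta)^*$ and at $w^\varsigma n(\zeta)^*$ via automorphy, then rearrange the argument place by place. Two minor points on your cocycle bookkeeping: the cover does \emph{not} split over $B$ (it fails already over $Z$; only $N$ and $A$ separately split, which is all you actually use), and in fact the local cocycles on the two sides of your identity already match at each place---both sides pick up $[-1,\xi]_v$---so the appeal to the product formula is unnecessary. The paper simply records the local identity $n(\xi\zeta_v^{-1})^{*}a(\xi)^{*}w^{\varsigma}n(\zeta_v)^{*}=a(\xi)^{*}w^{\varsigma}h_v^{*}$ directly, without further comment.
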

\begin{proof}
Define $\phi^{\zeta}:=R(n(\zeta)^{*})\phi$ and consider its Whittaker expansion. On the one hand,
\begin{equation*}
	\begin{array}{rcl}\vspace{0.1in}
	\varphi^{\zeta}(1)&=&\displaystyle\sum_{\mu\in\Omega(\omega_{\pi})}\sum_{\xi\in\Fx}  W_{\varphi}^{\mu^{\xi}}(a(\xi)^*n(\zeta)^{*})\\
	&=&\displaystyle\sum_{\mu\in\Omega(\omega_{\pi})}\sum_{\xi\in\Fx} \psi(\xi\zeta)\ W_{\varphi}^{\mu^{\xi}}(a(\xi)^{*})
	\end{array}
\end{equation*}
since $a(\xi)^*n(\zeta)^{*}=n(\xi\zeta)^{*}a(\xi)^{*}$. On the other hand, by the automorphy of $\phi^{\zeta}$, we have
\begin{equation*}
	\phi^{\zeta}(1)=\phi^{\zeta}(w^{\varsigma})=\sum_{\mu\in\Omega(\omega_{\pi})}\sum_{\xi\in\Fx}  W_{\varphi}^{\mu^{\xi}}(a(\xi)^{*}w^{\varsigma}n(\zeta)^{*}).
\end{equation*}
The lemma is now proved by inspecting the argument place-by-place. For instance, if $v\not\in S$ then $\pi_{v}(n(\zeta_{v})^{*})=1$. Otherwise note that
\begin{multline}
	n(\xi\zeta_v^{-1})^*a(\xi)^{*}w^{\varsigma}n(\zeta_{v})^* = a(\xi)^*n(\zeta_v^{-1})^* w^{*} n(\zeta_v)^* \\ = a(\xi)^*w^{\varsigma}\overline{n}(-\zeta_v^{-1})^*n(\zeta_v)^* = a(\xi)^*w^{\varsigma}\begin{pmatrix}
	1&\zeta_{v}\\
	-\zeta_{v}^{-1}&0
	\end{pmatrix}^{*}. \nonumber
\end{multline}
\end{proof}

We now come to our main adelic theorem. Up to choosing $\phi$ and thus $W_{\phi}^{\mu}$ (in other words Step~1 of our model strategy) this is a very general summation formula to accommodate various application settings.

\begin{theorem}\label{thm:adelic-Voronoi}
Let $\phi$ be a cuspidal automorphic form  with respect to some cuspidal automorphic representation  $\pi=\otimes_{v}\pi_{v}$ of $\GLt_{2}(\A)$. Further assume that $\phi$ is right-$N(\hat{\of})^{*}$-invariant and that it corresponds to a pure tensor. Let $\zeta=(\zeta_{v})\in \A$ and fix the following sets of places: let $S_{\zeta}$ be the set of non-archimedean places $v$ for which $\abs{\zeta_{v}}_{v}>1$ and at which $\phi_v$ is spherical; let $S_{\phi}$ be the set of places $v$ for which $\phi_{v}$ is not spherical together with all the archimedean places. Let $\psi=\otimes_{v}\psi_{v}$ be an additive character of $\A/F$.
Then we have
\begin{equation*}\begin{array}{l}
	\displaystyle\sum_{\mu\in\Omega(\omega_{\pi})}\sum_{\xi\in\Fx}   W_{\phi}^{\mu^{\xi}}(a(\xi)^{*})\, \psi(\delta\zeta)\,=\\\displaystyle \quad \sum_{\mu\in\Omega(\omega_{\pi})}\sum_{\xi\in\Fx} 
	[\mathcal{H}_{S_{\phi}}^{\mu^{\xi}}W_{\phi}^{\mu^{\xi}}](a(\xi)^*\iota_{S_{\zeta}}(a(\zeta^{-2}))^*) \,\prod_{v\in S_{\zeta}}[\zeta_v,-1]\mu_v(\zeta_v)\psi_{v}(-\xi\zeta_{v}^{-1})
\end{array}
\end{equation*}
where $\iota_{S_{\zeta}}$ denotes the map from $ \prod_{v\in S_{\zeta}}G(F_v)\to \GL_2(\A_{F})$, and
\begin{equation*}
	[\mathcal{H}_{S_{\phi}}^{\mu^{\xi}}W_{\phi}^{\mu^{\xi}}](g) = \prod_{v\notin S_{\phi}}W_{\phi,v}^{\mu^{\xi}}(g_v) \cdot \prod_{v\in S_{\varphi}} \sum_{\mu_v'\in \Omega(\omega_{\pi,v})} [\mathcal{H}_{v}^{\mu_v^{\xi},\mu_v'}W_{\phi,v}^{\mu_v^{\xi}}](g_v), 
\end{equation*}
for local transforms given by
\begin{multline}
	[\mathcal{H}_{v}^{\mu_v^{\xi},\mu_v'}W_{\phi,v}^{\mu_v^{\xi}}]((a(\xi)^*) =
	\\
	\zeta_v(1)^{-\delta_{v\nmid\infty}}\int_{F_v^{\times}}\psi_v(y\zeta_v)W_{\phi,v}^{\mu_v'}(a(y)^{*}) j_{\pi_v}^{\mu_v^{\xi},\mu_v'}(\xi y)[y,-1]\mu_v(y)^{-1}d^{\times}y. \nonumber
\end{multline}
\end{theorem}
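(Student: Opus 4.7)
The plan is to begin with the fundamental identity of Proposition~\ref{prop:fundamental-identity} and then evaluate each of the local factors on its right-hand side. Since $\phi$ is a pure tensor, we may factorise
\begin{equation*}
W_\phi^{\mu^\xi}(a(\xi)^* w^\varsigma h(\zeta))=\prod_v W_{\phi,v}^{\mu_v^\xi}(a(\xi)^* w^* h_v^*),
\end{equation*}
with $h_v=1$ off the set $S$ of Proposition~\ref{prop:fundamental-identity}. The analysis will split into three cases according as $v\notin S_\phi\cup S_\zeta$, $v\in S_\zeta$, or $v\in S_\phi$.

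At an unramified spherical place $v\notin S_\phi\cup S_\zeta$ we have $h_v=1$ and $w^*\in K_v^*$, so right-$K_v^*$-invariance of $\phi_v$ yields directly $W_{\phi,v}^{\mu_v^\xi}(a(\xi)^* w^*)=W_{\phi,v}^{\mu_v^\xi}(a(\xi)^*)$. For $v\in S_\zeta$, a short matrix computation will produce the Iwasawa decomposition
\begin{equation*}
a(\xi)\,w\,h_v\;=\;z(-\zeta_v)\,a(\xi\zeta_v^{-2})\,\overline{n}(\zeta_v^{-1})
\end{equation*}
in which $\overline{n}(\zeta_v^{-1})\in K_v$ because $|\zeta_v|_v>1$. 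Lifting this equality to $\GLt_2$ and exploiting right-$K_v^*$-invariance of the spherical vector together with left-$\Zt$-equivariance, the local factor reduces to $\mu_v^\xi(z(-\zeta_v)^*)\,W_{\phi,v}^{\mu_v^\xi}(a(\xi\zeta_v^{-2})^*)$ up to a metaplectic cocycle. Using $\mu_v^\xi(z(\lambda)^*)=[\lambda,\xi]\mu_v(\lambda)$, a bookkeeping of the cocycles from \eqref{eq:local-beta} should combine with the $\psi_v(-\xi\zeta_v^{-1})$ already present in Proposition~\ref{prop:fundamental-identity} to yield exactly the stated prefactor $[\zeta_v,-1]\mu_v(\zeta_v)\,\psi_v(-\xi\zeta_v^{-1})$.

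At a place $v\in S_\phi$ we will invoke the Bessel transforms of Propositions~\ref{prop:bessel-real} and \ref{prop:bessel-padic}. When $v\in S$ we first absorb the phase via the identity
\begin{equation*}
W_{\phi,v}^{\mu_v^\xi}(a(\xi)^* w^* h_v^*)\,\psi_v(-\xi\zeta_v^{-1})\;=\;W_{\phi,v}^{\mu_v^\xi}(a(\xi)^* w^* n(\zeta_v)^*),
\end{equation*}
already implicit in the proof of Proposition~\ref{prop:fundamental-identity}, and then apply Proposition~\ref{prop:bessel-padic} to the right-hand side. Manipulating the argument of $j_{\pi_v}^{\mu_v^\xi,\mu'_v}$ via the matrix identities $a(\xi)wa(y^{-1})=z(y^{-1})a(\xi y)w$, $n(\zeta_v)a(y^{-1})=a(y^{-1})n(y\zeta_v)$, and $wn(y\zeta_v)=\overline{n}(-y\zeta_v)w$, together with the left-$\Zt$ and right-$N^*$ equivariances of the Bessel function, should collapse the result to the defining integral of $\mathcal{H}_v^{\mu_v^\xi,\mu'_v}W_{\phi,v}^{\mu_v^\xi}$ at $a(\xi)^*$. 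At places $v\in S_\phi\setminus S$ the same strategy applies with $\zeta_v=0$ or $|\zeta_v|_v\leq 1$; in the latter range $\psi_v(y\zeta_v)=1$ on the integrand's support via Lemma~\ref{lem:support-whittaker}.

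The main obstacle will be the careful tracking of the metaplectic cocycles from \eqref{eq:local-beta}, which must be combined with the twist identities $\mu_v^\xi(z(\lambda)^*)=[\lambda,\xi]\mu_v(\lambda)$ and $\mu_v(y^{-1})=[y,-1]\mu_v(y)^{-1}$ in order to produce precisely the Hilbert-symbol factors $[\zeta_v,-1]\mu_v(\zeta_v)$ and $[y,-1]\mu_v(y)^{-1}$ displayed in the theorem.
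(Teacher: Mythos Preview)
Your proposal is correct and follows essentially the same route as the paper: start from Proposition~\ref{prop:fundamental-identity}, factor the global Whittaker function, and handle the three cases $v\notin S_\phi\cup S_\zeta$, $v\in S_\zeta$, $v\in S_\phi$ via $K_v^*$-invariance, an Iwasawa-type decomposition, and the local Bessel transforms of Propositions~\ref{prop:bessel-real} and~\ref{prop:bessel-padic}, respectively. The only cosmetic differences are that the paper decomposes $h_v$ itself rather than $a(\xi)wh_v$ at $v\in S_\zeta$, that your listed identity $wn(y\zeta_v)=\overline{n}(-y\zeta_v)w$ is unnecessary (the right-$N^*$ equivariance of $j_{\pi_v}^{\mu,\mu'}$ alone pulls out the factor $\psi_v(y\zeta_v)$), and that your explicit justification of $\psi_v(y\zeta_v)=1$ on the support for $v\in S_\phi\setminus S$ via Lemma~\ref{lem:support-whittaker} makes precise a point the paper passes over in silence.
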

\begin{proof}
This is a combination of Proposition \ref{prop:fundamental-identity} alongside evaluating $W_{\phi}$ place-by-place according to the local Bessel transform results (Propositions \ref{prop:bessel-real} \& \ref{prop:bessel-padic} and \S~\ref{sec:bessel-complex}) which we now detail.

Let $h(\zeta)$ be as in Proposition \ref{prop:fundamental-identity}.
Starting from Proposition \ref{prop:fundamental-identity}, it remains to evaluate
\begin{equation}
	W_{\phi}^{\mu^{\xi}}(a(\xi)^{\varsigma}w^{\varsigma} h(\zeta)) = \prod_v W_{\phi,v}^{\mu_v^{\xi}}(a(\xi)^{\varsigma}w^{\varsigma} h_v).\nonumber
\end{equation}
We do so by considering several cases.

First, we treat $v\notin S_{\phi}\cup S_{\zeta}$. In this case we simply observe that
\begin{equation}
	W_{\phi,v}^{\mu_v^{\xi}}(a(\xi)^{\varsigma}w^{\varsigma} h_v) = W_{\varphi,v}^{\mu^{\xi}}(a(\xi)^{*}), \nonumber
\end{equation}
since $h_v=1$ and $w^{\varsigma}\in K_v^*$.
	
Second, we consider $v\in S_{\zeta}$. We have the decomposition 
\begin{equation}
	h_v = (z(\zeta_v),[\zeta_v,-1])\overline{a}(\zeta_v^{-2})^*\left(\begin{matrix} \zeta_v^{-1} & 1 \\ -1 & 0\end{matrix}\right)^*.\nonumber
\end{equation}
Together with $w^*\overline{a}(x)^*(w^*)^{-1} = (a(x),[x,-1])$ and noting how $z\in \Zt$ commutes with other elements we find
\begin{equation}
	a(\xi)^{\varsigma}w^{\varsigma} h_v = (z(\zeta_v),[\zeta_v,-\xi])a(\xi\zeta_v^{-2})^*w^*\left(\begin{matrix} \zeta_v^{-1} & 1 \\ -1 & 0\end{matrix}\right)^*.\nonumber
\end{equation}
Using the transformation properties of $(\psi_v,\mu_v^{\xi})$-Whittaker functions and right-$K_v^*$-invariance we get
\begin{equation}
	W_{\phi, v}^{\mu_v^{\xi}}(a(\xi)^*w^* h_{v}) = [\zeta_v,-\xi]\mu_v^{\xi}(\zeta_v)W_{\phi,v}^{\mu_v^{\xi}}(a(\xi\zeta_v^{-2})^*) = [\zeta_v,-1]\mu_v(\zeta_v)W_{\varphi,v}^{\mu_v^{\xi}}(a(\xi\zeta_v^{-2})^*). \nonumber
\end{equation}
	
Finally, if $v\in S_{\varphi}$, we have
\begin{multline}
	W_{\phi,v}^{\mu_v^{\xi}}(a(\xi)^*w^* h_{v}) =
	\\
	\psi_v(\xi\zeta_v^{-1})\zeta_v(1)^{-\delta_{v\nmid\infty}}\sum_{\mu'\in\Omega(\omega_{\pi_v})}\int_{F_v^{\times}}\psi_v(y\zeta_v)W_{\phi,v}^{\mu_v'}(a(y)^{*}) j_{\pi_v}^{\mu_v^{\xi},\mu_v'}(\xi y)\mu_v(y)^{-1}[-1,y]d^{\times}y.
	\nonumber
\end{multline}
This follows from the original construction
\begin{equation}
	a(\xi)^*w^* h_{v} = n(\xi\zeta_v^{-1})^*a(\xi)^*w^*n(\zeta_v)^*. \nonumber
\end{equation}
The result then follows after re-assembling all the pieces.
\end{proof}

\section{The classical formulation}
\label{sec:classical}

We now recall the adelisation and de-adelsiation procedure and give some interpretations of our Vorono\"i formula in the classical language. We assume some familiarity with the theory of half-integral weight modular forms. Nonetheless we start by recalling some basics, mostly following \cite{shimura-half-integral}.

\subsection{Automorphic forms of half-integral weight}

It is convenient here to give an alternative definition of the metaplectic cover of $\GL_{2}^+(\R)$.
Let $\GLt_{2}^+(\R)$ denote the set of pairs $(g,\phi)$ such that $g=\left(\begin{smallmatrix}
a&b\\c&d
\end{smallmatrix}\right)\in\GL_{2}^+(\R)$ and $\phi\colon \C\rightarrow\C$ is a holomorphic function satisfying $\phi(z)^{2}= \frac{cz+d}{\sqrt{\det(g)}}$. Defining the law of multiplication
\begin{equation}\label{eq:classical-SLt}
(g_{1},\phi_{1}(z))\cdot (g_{2},\phi_{2}(z)) = (g_{1}g_{2}, \phi_{1}(g_{2} z)\phi_{2}(z)),
\end{equation}
we realise $\GLt_{2}^+(\R)$ as the non-trivial two-fold cover of $\GL_{2}^+(\R)$. Explicitly, recalling our choice of the branch cut of the complex square root, one obtains an element $$\epsilon:=\frac{\phi(z)}{(cz+d)^{\frac{1}{2}}}\in Z_{2}$$ such that \eqref{eq:classical-SLt} determines an isomorphism $(g,\phi)\mapsto (g, \epsilon)$ with the group of pairs $(g, \epsilon)\in \GLt_{2}(\R)$ with $\det(g)>0$; see \cite[Lemma 3.3]{gelbart-spectrum}. It will be useful to allow slightly more general co-cycles by only requiring 
\begin{equation}
\phi(z)^{2}= t\frac{cz+d}{\sqrt{\det(g)}}	\label{eq:gen_cocyc}
\end{equation}
for $t\in \{\pm 1\}$.
For the lack of convenient notation we write $\GLt_2^+(\R)$ for the group of tuples $(g,\phi)$, where $\phi$ satisfies the generalised co-cycle identity \eqref{eq:gen_cocyc}. Certainly $\GL_{2}^+(\R)\subset \GLt_2^+(\R)$ and we have the exact sequence $$ 1\to \Mt \to \GLt_2^+(\R) \to \GL_{2}^+(\R)\to 1.$$

For each integer $k\geq 0$, we let the group $\SLt_{2}(\R)$ (and similarly $\GLt_2^+(\R)$) act on the set of meromorphic functions $f\colon \uh\rightarrow\C$ by defining the `half-integral weight slash operator'
\begin{equation}\label{eq:slash-operator-1}
(f\vert_{\frac{k}{2}}\tilde{g})(z) := \left(\frac{\abs{\phi(z)}}{\phi(z)}\right)^{k} \, f(g z)
\end{equation}
for $\tilde{g}=(g,\phi)\in\GLt_{2}^+(\R)$. This action is genuine if and only if $k$ is odd, which we exclusively assume in our construction; otherwise we almost recover the usual integral weight slash operator on $\GL_{2}^+(\R)$.

The prototypical example of a modular form of half-integral weight is the theta function
\begin{equation*}
\vartheta(z)=\sum_{n\in\Z}e(n^{2}z)
\end{equation*}
for $z\in\uh$. Its square is a modular form of weight $1$, level $4$, and character $\chi_{4}$, the primitive Dirichlet character of conductor $4$ such that $\chi_{4}(x)=e(\frac{x-1}{4})$ if $x\in\Z$ is odd. To pick out a lattice in $\SLt_{2}(\R)$, define the co-cycle
\begin{equation}
j_{\vartheta}(\gamma, z ) := \frac{\vartheta(\gamma z)}{ \vartheta(z)} = \overline{\epsilon}_d\left(\frac{c}{d}\right)(cz+d)^{\frac{1}{2}} \nonumber
\end{equation}
for $\epsilon_d=\chi_4(d)^{\frac{1}{2}}$, $\gamma\in \Gamma_{0}(4)$ and $z\in\uh$. Here $\left(\frac{c}{d}\right)$ is the (modified) quadratic residue symbol as defined in \cite[Notation 3]{shimura-half-integral}. For odd $d$ and even $c$ this agrees with $\left(\frac{c}{d}\right)_S$ as defined in \cite[Proposition 2.16]{gelbart-spectrum}. The functional equation for $\theta^{2}$ implies that $$j_{\vartheta}((\begin{smallmatrix}
a&b\\c&d
\end{smallmatrix}), z)^{2}=\chi_4(d)(cz+d).$$
The projection $\SLt_{2}(\R)\rightarrow \SL_{2}(\R)$ splits over $\Gamma_{1}(N)$ whenever $N$ is a positive multiple of $4$. We observe this splitting by virtue of
\begin{equation*}
\Delta_{1}(N):=\left\lbrace \tilde{\gamma} = (\gamma, j_{\vartheta}(\gamma,z))  \, : \, \gamma\in\Gamma_{1}(N)\right\rbrace \subset \SLt_2(\R).
\end{equation*}
Since $j_{\vartheta}(\gamma_{1}\gamma_{2},z)=j_{\vartheta}(\gamma_{1},\gamma_{2}z)j_{\vartheta}(\gamma_{2},z)$, as required by \eqref{eq:classical-SLt}, $\Delta_{1}(N)$ defines a subgroup of $\SLt_{2}(\R)$. We define a modular form of half-integral weight to be a meromorphic function which is stabilised by $\Delta_{1}(N)$ under \eqref{eq:slash-operator-1}. For each integer $k\geq 0$, and $f\colon \uh\rightarrow\C$ we introduce the straightforward extension of the usual slash operation, as in \cite{duke_hyperbolic_distrtibution} (this might be unfamiliar for those used to classical holomorphic modular forms, but is very natural from a spectral point of view):
\begin{equation}\label{eq:slash-operator-2}
(f\vert_{\frac{k}{2}}\tilde{\gamma})(z) := \left(\frac{\abs{j_{\vartheta}(\gamma,z)}}{j_{\vartheta}(\gamma,z)}\right)^k\,f(\gamma z) 
\end{equation}
for $\gamma\in\Gamma_{0}(4)$. Note that, if $\gamma\notin \Gamma_1(N)$, then $\tilde{\gamma}$ is technically not an element in the group $\GLt_2^+(\R)$. However, the slash operation is still well defined precisely due to the more general transformation behaviour of the theta-function. For notational convenience we define
\begin{equation*}
\Delta_{0}(N):=\left\lbrace \tilde{\gamma} = (\gamma, j_{\vartheta}(\gamma,z))  \, : \, \gamma\in\Gamma_{0}(N)\right\rbrace\subset \SLt_2(\R).
\end{equation*}

\begin{defn}\label{def:classical-forms}
Let $k\geq 0$ be an integer and let $N$ be a positive multiple of $4$. Let $\chi$ be a Dirichlet character modulo $N$. We call a smooth function $f\colon \uh\rightarrow \C$ an automorphic form of weight $k+\frac{1}{2}$, level $N$, and character $\chi$ if the following are satisfied:
\begin{itemize}
	\item One has $f\vert_{k+\frac{1}{2}}\tilde{\gamma} = \chi(d) f$ for all $\gamma=\left(\begin{smallmatrix}
		a&b\\c&d
		\end{smallmatrix}\right)\in\Gamma_{0}(N)$.
	\item The function $f$ is a function of moderate growth on $\Gamma_{0}(N)\bs \uh$. 
	\item As a function of $\SLt_{2}(\R)$ it is an eigenfunction of the weight $k+\frac{1}{2}$ Laplace operator
	\begin{equation}\label{eq:casimir}
		\Delta_{k+\frac{1}{2}} :=-y^{2}\left(\frac{\partial^{2}}{\partial x^{2}}+\frac{\partial^{2}}{\partial y^{2}}\right)+i(k+\frac{1}{2})y\frac{\partial}{\partial x}
	\end{equation}
	with eigenvalue $\lambda=s(1-s)$. (i.e $\Delta_{k+\frac{1}{2}}f=\lambda f$.)
\end{itemize}	
Denote the set of such functions $f$ by $\tilde{\mathcal{A}}_{k+\frac{1}{2}}(N,\chi)$. Let $\tilde{\mathcal{A}}^{\circ}_{k+\frac{1}{2}}(N,\chi)$ denote the set of $f\in \tilde{\mathcal{A}}_{k+\frac{1}{2}}(N,\chi)$ such that $\lim_{z\rightarrow \af}f(z)=0$ for each cusp $\af$ of $\Gamma_{0}(N)\bs\uh$. Note that $\tilde{\mathcal{A}}^{\circ}_{k+\frac{1}{2}}(N,\chi)$ is empty unless $\chi(-1)=1$.
\end{defn}

If we wish to consider only the metaplectic cover $\GLt_2^+(\R)$ we may relax the first condition in Definition \ref{def:classical-forms} by assuming only $$f\vert_{k+\frac{1}{2}} \tilde{\gamma} = f \text{ for all }\gamma\in \Gamma_{1}(N),$$ 
whilst keeping the remaining conditions with the necessary modifications. The resulting spaces of such functions are denoted by $\tilde{\mathcal{A}}_{k+\frac{1}{2}}(N)$ and $\tilde{\mathcal{A}}_{k+\frac{1}{2}}^{\circ}(N)$ respectively. One can show that
\begin{equation}
\tilde{\mathcal{A}}_{k+\frac{1}{2}}^{\circ}(N) = \bigoplus_{\chi\text{ mod }N} \tilde{\mathcal{A}}_{k+\frac{1}{2}}^{\circ}(N,\chi). \nonumber
\end{equation}
We are thus naturally led to modular forms that transform with respect to characters under $\Delta_0(N)$.

\subsection{Relation to the literature}

Since our notion of a half-integral weight modular form slightly deviate from the standard ones, let us give some examples of how they relate to the usual classical constructions.

\begin{rem}
We start by recalling the classical definition of weight $\frac{1}{2}$ Maa\ss\  forms as given by S.~Katok and P.~Sarnak in \cite{katok-sarnak}. The theta function use loc.~cit.~is $\tilde{\vartheta}(z) := \Im(z)^{\frac{1}{4}}\vartheta(z).$ The resulting co-cycle is
\begin{equation}
	J(\gamma,z) = \frac{\tilde{\vartheta}(\gamma z)}{\tilde{\vartheta}( z)} = \frac{j_{\vartheta}(\gamma z)}{\abs{cz+d}^{\frac{1}{2}}} = \frac{j_{\vartheta}(\gamma, z)}{\abs{j_{\vartheta}(\gamma, z)}}
\end{equation}
The space considered in \cite{katok-sarnak} now translates into
\begin{align}
	&L^2_{\text{cusp}}(\Gamma_0(N)\backslash\Hf,J) = \tilde{\mathcal{A}}^{\circ}_{\frac{1}{2}}(4,\text{Id}) \nonumber \\
	&= \left\{ f\colon \Hf\to\C\colon \text{square-integrable and cuspidal with }f\vert_{\frac{1}{2}} \gamma = f ,\, \forall \gamma\in\Gamma_0(4)   \right\}.\nonumber
\end{align}
In particular they are interested in eigenfunctions of $-\Delta_{\frac{1}{2}}$.
\end{rem}

\begin{rem}
Even though our notion of an automorphic form includes holomorphic modular forms of half integral weight, it differs slightly from the classical definition. Indeed, a classical modular form $F$ of weight $k+\frac{1}{2}$ is usually defined by requiring 
\begin{itemize}
		\item $F(\gamma z) = \chi(d)j_{\theta}(\gamma,z)^{2k+1} F(z)$ for all $\gamma\in \Gamma_0(N)$,
		\item $F$ is holomorphic in $\Hf$ and at the cusps.
\end{itemize}
Given such a form $F$ we claim that $f(z)=\Im(z)^{\frac{2k+1}{4}}F(z)$ satisfies the definition of a weight $k+\frac{1}{2}$, level $N$ automorphic form of character $\chi$. To see that $f$ exhibits the correct transformation behaviour is straightforward. Using the fact that holomorphic functions are harmonic in that they are annihilated by $\Delta_{\R^2}=\frac{\partial^2}{\partial x^2}+\frac{\partial^2}{\partial y^2}$, we compute that
\begin{equation}
	\Delta_{k+\frac{1}{2}}f = \frac{2k+1}{4}\left(1-\frac{2k+1}{4}\right)f. \label{eq:eig_hol_form}
\end{equation}
Additionally the Cauchy--Riemann equations for $F$ imply that
\begin{equation}
	\Lambda_{k+\frac{1}{2}}f = \left[iy\frac{\partial}{\partial x}-y\frac{\partial}{\partial y}+\frac{2k+1}{4}\right]f = 0.\nonumber
\end{equation}
Thus, $f$ lives in the kernel of the Maa\ss\  weight lowering operator. Moreover, the two conditions \eqref{eq:eig_hol_form} and $\Lambda_{k+\frac{1}{2}}f = 0$ characterise the holomorphy of $F$. This is a nice exercise involving the Cauchy--Riemann equations.
\end{rem}

\subsection{Fourier expansions}\label{sec:fourier-expn}

For a cusp $\af$ of $\Gamma_{0}(N)\bs \uh$ let $\sigma\in\SL_{2}(\Z)$ such that $\sigma \af = \infty$. Suppose that $\gamma\in \Gamma_{0}(N)$ satisfies $\gamma\af = \af$. Then $\sigma \gamma \sigma^{-1}$ fixes the cusp $\infty$ implying it is a matrix of the form $\pm n(x)$ for some $x\in\Z$. Since $\gamma^{-1}\af = \af$, we can make the definition
\begin{equation*}
w(\af):=\min\left\lbrace\, x\geq 1 \, : \, \sigma \gamma \sigma^{-1}= n(x) \text{ for some } \gamma\in\Gamma_{0}(N)\, \right\rbrace.
\end{equation*}
The positive integer $w(\af)$ is called the `width' of the cusp $\af$. It is defined independently of the choice of $\sigma$. If the cusp $\af$ has denominator $q$ (so that $\af=\frac{a}{q}\in\Pb^{1}(\Q)$ with $q\mid N$ and $\ggT(a,N)=1$), it is well known \cite[\S 3.4.1]{nps} that $w(\af)=N/\ggT(q^{2},N)$.

Let $M\mid N$ denote the conductor of $\chi$. For any $x\in\Z$ we have $(f\vert_{k+\frac{1}{2}}\sigma^{-1})(z+xw(\af)) = \chi(1 + axw(\af)q) (f\vert_{k+\frac{1}{2}}\sigma^{-1})(z).$ Note that $\chi(1 + axw(\af)q)= 1$ if and only if $M\mid qw(\af)x$, or equivalently, $\frac{M}{\ggT(qw(\af), M)} \mid x$. Define
\begin{equation*}
\delta(\af):= w(\af) \frac{M}{\ggT(qw(\af), M)} = \frac{\kgV(q^2, N, qM)}{q^2}.
\end{equation*}
Then $\delta(\af)$ is the least integer $d\geq 1$ such that $(f\vert_{k+\frac{1}{2}}\sigma^{-1})(z+d) =  (f\vert_{k+\frac{1}{2}}\sigma^{-1})(z)$. Consequently any $f\in\tilde{\mathcal{A}}_{k+\frac{1}{2}}(N,\chi)$ admits a Fourier expansion at the cusp $\af$ as given by
\begin{equation}\label{eq:cusp-fourier-expansion}
(f\vert_{k+\frac{1}{2}}\tilde{\sigma}^{-1})(z)=\sum_{n\in\Z}a_{f}(n;\af)\,\kappa_{f}(ny/\delta(\af))e(nx/\delta(\af))
\end{equation}
where $\kappa_{f}(\cdot)$ is a Whittaker function depending on the Laplace eigenvalue of $f$ as well as the weight. By separation of variables we show
\begin{equation}
\kappa_{f}(y) = W_{\sgn(y)\frac{2k+1}{4},\frac{s}{2}}(4\pi\abs{y})\nonumber
\end{equation}
for $y\neq 0$. A function $f\in\tilde{\mathcal{A}}_{k+\frac{1}{2}}(N,\chi)$ is cuspidal if and only if $a_{f}(0;\af) = 0$ for all cusps $\af$. We write $a_f(n) = a_f(n;\infty)$. It is important to remember that the coefficients depend on the choice of the scaling matrix $\sigma$. Even though this is a weak dependence it also depends on how we lift $\sigma\in \SL_2(\Z)$ to $\tilde{\sigma}\in\SLt_2(\R)$.

\begin{rem}
	Suppose $f$ has eigenvalue $\lambda = \frac{2k-1}{4}\left(1-\frac{2k-1}{4}\right)$. Then
	\begin{equation}
	\kappa_f(y) = W_{\frac{2k+1}{4},\frac{2k-1}{4}}(4\pi y) = (4\pi y)^{\frac{2k+1}{4}}e^{-2\pi y} \nonumber
	\end{equation}
	for $y>0$. Furthermore, the condition $\Lambda_{k+\frac{1}{2}}f=0$ implies that $a_f(n;\af)=0$ for $n<0$. Therefore one recovers the classical Laurent expansion at the cusps, which is well known for classical holomorphic modular forms of weight $k+\frac{1}{2}$.
\end{rem}

\subsection{Connection to the adelic theory}
\label{sec:connection-to-adeles}

We now determine an embedding of $ \tilde{\mathcal{A}}^{\circ}_{k+\frac{1}{2}}(N,\chi)$ into the subset of adelic automorphic forms. We need the subgroups
\begin{equation}
K_1(N) = \prod_{p<\infty} K_{1,p}(v_p(N));\quad K^1(N) = \prod_{p<\infty} K_{p}^1(v_p(N))
\end{equation}
and
\begin{equation}
K_0(N) = \prod_{p<\infty} K_{0,p}(v_p(N)) \nonumber
\end{equation}
of the adele group $\GL_2(\A)$. (Note that $K^1(N)$ corresponds to $K_0^N$ in the notation of \cite{gelbart-spectrum}.) Our local co-cycles are set-up such that $K^1(N)^*$ defines a subgroup of $\GLt_2(\A)$. The following lemma connects the splitting of $\GL_2(\Q)$ with the (modified) Legendre-symbol.

\begin{lemmy}[Proposition 2.16, \cite{gelbart-spectrum}] \label{lm:comparison_cocycle}
	For all $\gamma=\left(\begin{matrix}a&b\\ c&d\end{matrix}\right)\in \Gamma_1(4)$ one has
	\begin{equation}
	\left(\frac{c}{d}\right) = s(\gamma). \nonumber
	\end{equation}
\end{lemmy}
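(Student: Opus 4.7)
The plan is to unwind both sides of the identity and verify their equality by direct calculation with the $p$-adic Hilbert symbol and quadratic reciprocity. If $c=0$ then $\gamma\in\Gamma_1(4)$ forces $\gamma=n(b)$ and both sides equal $1$, so we may assume $c\neq 0$. Since $\det(\gamma)=1$, the definition of $s$ in \S\,2.1 collapses to
\begin{equation*}
s(\gamma)=\prod_{p\,:\,v_p(c)\text{ odd}}[c,d]_p,
\end{equation*}
a finite product over non-archimedean places of Hilbert symbols.

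I would then evaluate the factors locally. At an odd prime $p$, the coprimality $\gcd(c,d)=1$ combined with $d$ odd gives $d\in\Z_p^\times$, and the tame symbol formula yields $[c,d]_p=\left(\tfrac{d}{p}\right)^{v_p(c)}=\left(\tfrac{d}{p}\right)$ when $v_p(c)$ is odd. At $p=2$ the hypothesis $4\mid c$ means that only the case $v_2(c)\geq 3$ odd can contribute, and the standard formula for the dyadic Hilbert symbol collapses (using $d\equiv 1\pmod 4$, which makes $\tfrac{d-1}{2}$ even) to $[c,d]_2=(-1)^{(d^2-1)/8}=\left(\tfrac{2}{|d|}\right)$.

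Writing $c=\pm 2^{v_2(c)}m$ with $m>0$ odd, assembling the local factors produces
\begin{equation*}
s(\gamma)=\Big(\tfrac{2}{|d|}\Big)^{v_2(c)}\prod_{p\text{ odd}}\Big(\tfrac{d}{p}\Big)^{v_p(c)},
\end{equation*}
and quadratic reciprocity converts the right-hand product into the Jacobi symbol $\left(\tfrac{m}{|d|}\right)$: the reciprocity sign vanishes when $d>0$ by $d\equiv 1\pmod 4$, and when $d<0$ the now non-trivial reciprocity sign exactly cancels the factor $\left(\tfrac{-1}{m}\right)$ picked up when splitting off the sign of $d$, because $|d|\equiv 3\pmod 4$. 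Expanding Shimura's modified symbol $\left(\tfrac{c}{d}\right)$ in the same variables yields the same expression, since the additional sign Shimura prescribes for $d<0$ and $c<0$ is cancelled precisely by $\left(\tfrac{-1}{|d|}\right)=-1$ arising from $|d|\equiv 3\pmod 4$.

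The main obstacle is the sign bookkeeping in the case $d<0$, where both Shimura's non-Kronecker convention and the (now non-trivial) reciprocity sign must be tracked simultaneously. The simplification that makes everything align is that $\gamma\in\Gamma_1(4)$, rather than merely $\Gamma_0(4)$, forces $d\equiv 1\pmod 4$, which is exactly the hypothesis needed to kill the parity corrections at both the $2$-adic place and in quadratic reciprocity at once. This identity is the content of Proposition~2.16 of \cite{gelbart-spectrum}, to which one may alternatively simply appeal.
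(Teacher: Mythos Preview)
The paper gives no proof of this lemma at all: it is stated with the attribution ``Proposition~2.16, \cite{gelbart-spectrum}'' in the header and then used as a black box. Your proposal therefore goes strictly beyond the paper, supplying a direct verification where the paper is content to cite.

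Your argument is correct. The reduction to $s(\gamma)=\prod_{p:\,v_p(c)\text{ odd}}[c,d]_p$ is immediate from the definition of $s$ (with $\det\gamma=1$), the tame evaluation at odd $p$ is right since $\gcd(c,d)=1$ forces $d\in\Z_p^\times$, and the dyadic evaluation is exactly where $d\equiv 1\pmod 4$ kills the $\epsilon(c')\epsilon(d)$ term in the $2$-adic Hilbert symbol formula. The quadratic-reciprocity bookkeeping in the two sign regimes for $d$ is handled correctly: for $d>0$ the reciprocity sign is trivial and $\left(\tfrac{-1}{d}\right)=1$; for $d<0$ one has $|d|\equiv 3\pmod 4$, and the two extra signs (from reciprocity and from $\left(\tfrac{-1}{m}\right)$) indeed cancel to give $\left(\tfrac{m}{|d|}\right)$, which then matches Shimura's symbol after unpacking his convention for negative $d$. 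Your closing remark that one may alternatively appeal to Gelbart is exactly what the paper does.
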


Next we need a version of strong approximation that applies to $\GLt_2(\A)$.

\begin{lemmy}[Lemma 3.2, \cite{gelbart-spectrum}]
We have
\begin{equation}\label{eq:strong-approx-GL}
	\GLt_{2}(\A)=\GL_{2}(\Q)^{\varsigma}\cdot (\GLt_{2}^+(\R)  \times K^1(N)^*).
\end{equation}
Furthermore, if we write $(g,\zeta) = \gamma^{\varsigma}(g_{\infty},\zeta_{\infty})k^*$ accordingly, then $(g_{\infty},\zeta_{\infty})$ is unique up to left multiplication by $\Delta_1(N)$.
\end{lemmy}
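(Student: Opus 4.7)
The strategy is to apply classical strong approximation on $\GL_2$, lift the resulting factorisation to $\GLt_2(\A)$ by adjusting the $Z_2$-component, and then match the uniqueness statement with the classical lift $\Gamma_1(N)\hookrightarrow\Delta_1(N)$ at the archimedean place. For existence, given $(g,\zeta)\in\GLt_2(\A)$, project to $\GL_2(\A)$ and apply the strong approximation theorem for $\GL_2$ (which follows from the analogous statement for $\SL_2$ together with the surjectivity of $\det\colon K^1(N)\to\hat{\Z}^\times$) to write $g=\gamma g_\infty k$ with $\gamma\in\GL_2(\Q)$, $g_\infty\in\GL_2^+(\R)$ and $k\in K^1(N)$. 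To lift this, one computes the product $\gamma^\varsigma\cdot(g_\infty,1)\cdot k^*\in\GLt_2(\A)$: its $Z_2$-component is $s(\gamma)$ times the contributions from $\prod_v\beta_v$, and since $\beta_v(h,1)=\beta_v(1,h)=1$, only the single archimedean term $\beta_\infty(\gamma,g_\infty)$ and finitely many finite terms $\beta_v(\gamma,k_v)$ survive. Choosing $\zeta_\infty\in Z_2$ to absorb this $Z_2$-discrepancy yields a decomposition $(g,\zeta)=\gamma^\varsigma(g_\infty,\zeta_\infty)k^*$.

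For uniqueness, suppose we have two decompositions $\gamma^\varsigma(g_\infty,\zeta_\infty)k^*=(\gamma')^\varsigma(g_\infty',\zeta_\infty')(k')^*$. Because $\varsigma$ is a group homomorphism, rearranging the identity isolates the element $\delta:=(\gamma')^{-1}\gamma\in\GL_2(\Q)$. Projecting to $\GL_2(\A)$ one sees that $\delta=k'k^{-1}$ at finite places and $\delta g_\infty=g_\infty'$ at the archimedean place. The former implies $\delta\in\GL_2(\Q)\cap K^1(N)\subset\GL_2(\Q)\cap\GL_2(\hat{\Z})$; consequently $\det\delta\in\Q^\times\cap\hat{\Z}^\times=\{\pm 1\}$, and positivity of $\det g_\infty,\det g_\infty'$ forces $\det\delta=1$. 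Combined with the congruence conditions defining $K^1(N)$, this yields $\delta\in\Gamma_1(N)$.

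The main obstacle is the $Z_2$-component: one must show that the metaplectic ambiguity in $(g_\infty,\zeta_\infty)$ is exactly that produced by the $\Delta_1(N)$-lift of $\delta$. Expanding the $Z_2$-part of the identity above in $\GLt_2(\A)$ produces the factor $\mu=s(\delta)\prod_{v<\infty}\beta_v(\delta,k_v)$ which must be attached to $g_\infty$. Since $4\mid N$, the subgroup $K^1(N)^*$ embeds in $\GLt_2(\A)$ and the local cocycles vanish on pairs from $K^1_v(v_v(N))$ at every finite place; together with $\delta\in\Gamma_1(N)\subset K^1(N)$ this collapses the product to $\mu=s(\delta)$. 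By Lemma \ref{lm:comparison_cocycle}, $s(\delta)=\left(\frac{c}{d}\right)$. Since $\delta\in\Gamma_1(N)\subset\Gamma_1(4)$ satisfies $d\equiv 1\pmod{4}$, one has $\overline{\epsilon}_d=1$ so that $j_\vartheta(\delta,z)/(cz+d)^{1/2}=\left(\frac{c}{d}\right)$. Hence the $Z_2$-component $\mu$ equals the $Z_2$-component of the classical lift in $\Delta_1(N)$, which completes the uniqueness.
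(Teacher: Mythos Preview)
The paper does not give its own proof of this lemma: it is quoted directly from Gelbart's monograph \cite{gelbart-spectrum} and used as input. Your argument is therefore not being compared against anything in the paper itself, but it does reproduce the standard proof: descend to strong approximation for $\GL_2(\A)$, lift by adjusting the $Z_2$-component, and for uniqueness identify $\delta=(\gamma')^{-1}\gamma\in\Gamma_1(N)$ and verify that the residual $Z_2$-factor matches the $\Delta_1(N)$-lift.

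The cocycle bookkeeping is correct. One point worth making explicit is that an archimedean factor $\beta_\infty(\delta,g_\infty)$ also appears when you expand the adelic identity, but the same factor arises when you form the product $(\delta,\epsilon_\delta)\cdot(g_\infty,\zeta_\infty)$ inside $\GLt_2^+(\R)$, so it cancels and the net $Z_2$-discrepancy is indeed $s(\delta)\prod_{v<\infty}\beta_v(\delta,k_v)$. Your collapse of the finite product to $1$ uses exactly the local input recorded in the paper (trivial splitting over $K^1_v(v_v(N))$ once $v_v(4)\le v_v(N)$), and the final identification $s(\delta)=\left(\tfrac{c}{d}\right)=j_\vartheta(\delta,z)/(cz+d)^{1/2}$ via Lemma~\ref{lm:comparison_cocycle} together with $\overline{\epsilon}_d=1$ for $d\equiv 1\pmod 4$ is precisely what is needed.
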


Finally, it will be important to understand how the decomposition of strong approximation is affected by right multiplication by $K_0(N)$.

\begin{lemmy}\label{lm:right_ak_K}
Let $g=\gamma^{\varsigma}(g_{\infty},\zeta_{\infty})k_1^*\in \GLt_2(\A)$ and $k\in K_0(N)\cap K^1(4)$. Then we have 
\begin{equation}
	gk = \tilde{\gamma}^{\varsigma} [(\delta,s(\delta))\cdot(g_{\infty},\zeta_{\infty})]k_2^*\nonumber
\end{equation}
for $\delta \in \Gamma_0(N)$ with $\iota_{fin}(\delta) = k_2k^{-1}k_1^{-1}\in K_0(N)$. 
\end{lemmy}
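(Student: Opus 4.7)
The plan is to produce $\delta\in\Gamma_0(N)$ so that inserting a fake identity $(\delta^{\varsigma})^{-1}\delta^{\varsigma}$ into the given strong-approximation decomposition of $g$ and re-sorting the factors by support yields the desired form. The two facts driving the argument are that $\varsigma$ is a group homomorphism and that local cocycles $\beta_v$ collapse to $+1$ whenever one input is the identity or both inputs lie in $K^1(4)^*$.

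\textbf{Step 1: construction of $\delta$.} I want $k_2:=\iota_{fin}(\delta)\,k_1 k$ to lie in $K^1(N)$. At every finite prime $p$ the constraint $(k_2)_{21}\in N\Zp$ is automatic, because $\delta\in\Gamma_0(N)$, $k_1\in K^1(N)$ and $k\in K_0(N)$ each have lower-left entry in $N\Zp$. The constraint $(k_2)_{11}\equiv 1\pmod{N\Zp}$ reduces (using $k_{1,11}\equiv 1\pmod N$) to $\delta_{11}\equiv k_{11}^{-1}\pmod{N}$. Such a $\delta\in\Gamma_0(N)$ exists by elementary surjectivity of $\Gamma_0(N)\to(\Z/N\Z)^{\times}$ on the top-left entry (via $\SL_2(\Z)\twoheadrightarrow\SL_2(\Z/N\Z)$). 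Moreover, since $k\in K^1(4)$ forces $k_{11}\equiv 1\pmod 4$ and $4\mid N$, the chosen $\delta$ automatically satisfies $\delta_{11}\equiv 1\pmod 4$, whence $\iota_{fin}(\delta)\in K^1(4)$ as well.

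\textbf{Step 2: rewriting $gk$.} Put $\tilde\gamma:=\gamma\delta^{-1}\in\Gamma_0(N)$. Since $\varsigma$ is an injective group homomorphism, $\gamma^{\varsigma}=\tilde\gamma^{\varsigma}\cdot\delta^{\varsigma}$. Factor $\delta^{\varsigma}\in\GL_2(\A)\times Z_2$ as the product of the archimedean block $(\delta_\infty,s(\delta))$ and the finite-place block $\iota_{fin}(\delta)^*$; this carries no cocycle sign because $\beta_v$ is trivial whenever one of its inputs is the identity. The same reason lets us commute $\iota_{fin}(\delta)^*$ past $(g_\infty,\zeta_\infty)$, giving
\[
gk=\tilde\gamma^{\varsigma}\,(\delta_\infty,s(\delta))\,(g_\infty,\zeta_\infty)\cdot\iota_{fin}(\delta)^* k_1^* k^*.
\]

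\textbf{Step 3: collapsing the finite tail.} The remaining point, and the step most likely to cause trouble, is the on-the-nose identification $\iota_{fin}(\delta)^* k_1^* k^*=k_2^*$ in $\GLt_2(\A)$, i.e.\ that the accumulated $Z_2$-cocycle is trivial. By Step 1 each of $\iota_{fin}(\delta)$, $k_1$, $k$ lies in $K^1(4)$ at every finite place (the first by construction, the second via $K^1(N)\subset K^1(4)$, the third by hypothesis). Since $K^1(4)^*$ is a subgroup of $\GLt_2(\A)$ by \cite[Prop.~2.8]{gelbart-spectrum}, the $*$-embedding is multiplicative on these matrices and the product equals $k_2^*$ exactly. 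Substituting into Step 2 yields the identity claimed in the lemma; the hypothesis $k\in K^1(4)$ is essential precisely because without it the $p=2$ cocycle need not vanish.
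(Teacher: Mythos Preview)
The paper states this lemma without proof, so there is no argument to compare against. Your proof is correct and supplies exactly the verification needed: you construct $\delta\in\Gamma_0(N)\cap\Gamma_1(4)$ via the surjection $\SL_2(\Z)\twoheadrightarrow\SL_2(\Z/N\Z)$ so that $k_2:=\iota_{fin}(\delta)k_1k$ lands in $K^1(N)$, split $\delta^{\varsigma}$ into its archimedean and finite pieces (with trivial cocycle since the supports are disjoint), and then collapse $\iota_{fin}(\delta)^*k_1^*k^*$ to $k_2^*$ by invoking the trivial splitting over $K^1(4)^*$ recorded in \S 2.2.1 of the paper. Your observation that the hypothesis $k\in K^1(4)$ is needed precisely to kill the $2$-adic cocycle in this last step is exactly right; at odd primes the whole of $K_p^*$ is already a subgroup, so no extra condition is required there.
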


Let $f\in \tilde{\mathcal{A}}_{k+\frac{1}{2}}(N,\chi)$. The adelisation of $\chi$ will be denoted by $\omega_{\chi}$ and we define
\begin{equation}
\omega_{\chi}(k) = \prod_{v \mid M}\omega_{\chi,v}(d_v) \nonumber
\end{equation}
for $k\in K_0(N)$.
Define the function $\varphi_{f}\colon \GLt_2(\A)\to \C$ by
\begin{equation}\label{eq:adelic-embedding}
\varphi_{f}({g}):=\omega_{\chi}(k)(f\vert_{k+\frac{1}{2}}{g}_{\infty})(i)
\end{equation}
where $g=\gamma z(\lambda) g_{\infty} k$ for $\gamma\in \GL_{2}(\Q)^{\zeta}$, $\lambda\in\Rx$, $g_{\infty}\in\SLt_{2}(\R)$, and $k=(\left(\begin{smallmatrix}
a&b\\c&d
\end{smallmatrix}\right),\epsilon)\in K^1(N)$. Let us check several issues. 
\begin{itemize}
	\item $\varphi_f$ is well defined. Indeed, if $g=\gamma_1 z(\lambda_1)g_{\infty,1}k_1 =  \gamma_2 z(\lambda_2)g_{\infty,2}k_2$, then $g_{\infty,1} = \gamma g_{\infty,2}$ for $\gamma\in \Delta_1(N)\subset \SLt_2(\R)$. With this to hand we check that
	\begin{align}
	\varphi_f(\gamma_1z(\lambda_1))g_{\infty,1}k_1)
	&= \omega_{\chi}(k_1)\left[(f\vert_{k+\frac{1}{2}}g_{\infty,1})\right](i) \nonumber\\
	&= \omega_{\chi}(k_1)\left[(f\vert_{k+\frac{1}{2}}\gamma g_{\infty,1})\right](i) \nonumber\\
	&= \omega_{\chi}(k_1)\left[(f\vert_{k+\frac{1}{2}}g_{\infty,2})\right](i)\nonumber\\
	&= \omega_{\chi}(k_1k_2^{-1})\varphi_f(\gamma_2z(\lambda_2))g_{\infty,2}k_2).\nonumber
	\end{align}
	But $\omega_{\chi}(k_1k_2^{-1})=1.$ Thus we have shown that the definition of $\varphi_f$ is independent of choices made whilst using strong approximation.
	
	\item For $k\in K_0(N)\cap K^1(4)$ and $g\in \GLt_{2}(\A)$ we have
	\begin{equation}
	\varphi_f(gk^*) = \omega_{\chi}(k)\varphi_f(g). \nonumber
	\end{equation}
	Indeed by Lemma~\ref{lm:right_ak_K} and the definition we have
	\begin{equation}
		\varphi_f(gk^*) = \omega_{\chi}(k_1^{-1}k_2)\chi(\delta)\varphi_f(g).\nonumber
	\end{equation}
	However $\chi(\delta) = \omega_{\chi}(k_2^{-1}kk_1)$ and the claimed equality follows.
	
	\item Writing $r=qr_{\infty}r_{fin} \in  \Q\R_{>0}\hat{\Z}^{\times}=\A^{\times}$ we find that $(z(r^2),\epsilon)g = (z(q^2)\gamma)\cdot z(\lambda^2r_{\infty}^2)\cdot ((1,\epsilon)g_{\infty})\cdot(z(r_{fin}^2)k)$. According to our adelisation procedure we find
	\begin{equation}
	\varphi_f((z(r^2),\epsilon)g) = \epsilon\omega_{\chi}(z(r_{fin}^2))\varphi_f(g) = \epsilon\omega_{\chi}(r^2)\varphi_f(g). \nonumber 
	\end{equation}
	In other words, $\varphi_f$ is genuine and transforms with respect to the central character $\omega_{\chi}$.
	
	\item Furthermore, we check that by definition we have 
	\begin{equation}
	\varphi_f(gk(\theta)) = e^{i(k+\frac{1}{2})\theta}\varphi_f(g), \nonumber
	\end{equation} 
	for all $g\in \GLt_2(\A)$ and $k(\theta)\in \tilde{\SO}_{2}$. Note that here for $\theta\in [0,4\pi)$ we identify
	\begin{equation}
	k(\theta) = \left[\left(\begin{matrix} \cos(\theta ) & \sin(\theta) \\ -\sin(\theta) & \cos(\theta) \end{matrix}\right), \begin{cases} 1 &\text{ if }\theta \in [0,2\pi),\\ -1 &\text{ if }\theta\in [2\pi,4\pi) \end{cases}\right] \in \widetilde{\SO}_2. \nonumber
	\end{equation}
	In particular, using Iwasawa coordinates we write $$\tilde{g}_{\infty} = \left(\begin{matrix}y^{\frac{1}{2}} & x \\ 0 & y^{-\frac{1}{2}} \end{matrix}\right)^* k(\theta) \in \SLt_2(\R)$$ for $y\in \R_{>0}$, $x\in\R$, and $\theta\in [0,4\pi)$. One thus computes that
	\begin{equation}
	\varphi_f(\tilde{g}_{\infty})= e^{i(k+\frac{1}{2})\theta}f(x+iy). \label{eq:de-adelisation}
	\end{equation}
	
	\item In Iwasawa coordinates the Casimir operator of $\widetilde{\SL}_2(\R)$ is given by
	\begin{equation}
	\Omega = -y^2\left(\frac{\partial^2}{\partial x^2}+\frac{\partial^2}{\partial y^2}\right)+y\frac{\partial^2}{\partial x\partial \theta}. \nonumber
	\end{equation} 
	In particular, according to \eqref{eq:de-adelisation}, we have $\Omega\varphi_f\vert_{\SLt_{2}(\R)} = \Delta_{k+\frac{1}{2}}f=\lambda f$.
	
\end{itemize}
Moreover, it turns out that $\varphi_f$ has moderate growth and, additionally, if $f$ is cuspidal then so is $\varphi_f$. In particular we have an injection
\begin{equation}
\tilde{\Ac}^{\circ}_{k+\frac{1}{2}}(N,\chi) \ni f \mapsto \varphi_f \in \Ac^{\circ}(\GLt_{2}(\A)), \nonumber
\end{equation}
which respects the cuspidal subspaces. Taking all weights $k+\frac{1}{2}$, levels $N$, and characters $\chi$ into account, this exhausts the space $\Ac^{\circ}(\GLt_{2}(\A))$.

\subsection{Connection to automorphic representations}

From now on we assume that $f\in \tilde{\Ac}_{k+\frac{1}{2}}^{\circ}(N,\chi)$ is an eigenfunction of the Hecke operators $T_{p^2}$ for all primes $p\nmid N$. If this is the case, $\varphi_{f}$ generates an (irreducible) genuine cuspidal automorphic representation of $\GLt_{2}(\A)$ in the sense of \cite[Def.~3, p.~58]{gelbart-spectrum}; we denote this representation by $\pi_{f}$. This follows directly from strong multiplicity one. In particular, there exist irreducible, unitary, genuine representations $\pi_{v}$ of $\GLt_{2}(\Q_{v})$, for each place $v$ of $\Q$, such that $\pi_{v}$ is class one for all finite $v\nmid N$; and $\pi_{f}$ decomposes as a restricted tensor product $\pi_{f}=\otimes_{v}\pi_{v}$. The central character of $\pi_f$ is given by $\omega_{\chi}$.

At the archimedean place $v=\infty$ we can give a more detailed description of $\pi_{\infty}$ depending on the type of $f$. To this end, observe that if $f$ is a Maa\ss\ form--which is distinguished by the fact that it is not annihilated by any combination of the Maa\ss\ raising and lowering operators--of weight $k+\frac{1}{2}$ with Laplace eigenvalue $\lambda=s(1-s)$, then 
\begin{equation}
\pi_{\infty}\cong \pi(\sgn^k, s-\frac{1}{2}).\nonumber
\end{equation}
This can be an irreducible principal series if $s-\frac{1}{2}\in i\R$ or a complementary series if $s-\frac{1}{2}\in(0,\frac{1}{2})$. 

Furthermore, let $f=\Im^{\frac{2k+1}{4}}F$ for a classical holomorphic modular form $F$ of weight $k+\frac{1}{2}$; in which case $f$ is annihilated by $\Lambda_{k+\frac{1}{2}}$. Then we have that
\begin{equation}
\pi_{\infty}=\sigma(\sgn^k, k) \nonumber
\end{equation}
is a discrete series representation. 

In all these cases the archimedean component $v_{f,\infty}$ of $f$ is of the form $$ v_{f,\infty}  = (C_f \phi_k,0)$$ when restricted to $\SLt_2(\R)$. Here $\phi_k$ is defined in the induced picture by
\begin{equation}
	\phi_k(k(\theta)) = e^{i(k+\frac{1}{2})\theta}.\nonumber
\end{equation}

\subsection{Fourier vs.~Whittaker expansion}

Consider the adelic Whittaker functions $W_{\varphi}$. If $\varphi$ comes from a classical object, these relate to the standard Fourier coefficients as follows.
Fix the additive character $\psi=\otimes_{v}\psi_{v}$ on $\A/\Q=(\R/\Z)\times\prod_{p<\infty}\Zp$ by $\psi_{\infty}(x)=e(x)$ for $x\in\R$ and $\psi_{p}\vert_{\Zp}=1$ but $\psi_{p}(p^{-1})\neq 1$ for $p<\infty$.

\begin{prop} \label{pr:relation_whitt_fourier}
Let $f\in \tilde{\Ac}_{k+\frac{1}{2}}(N,\chi)$ and let $\af$ be a cusp of $\Gamma_{0}(N)\bs\uh$ with scaling matrix $\sigma\af=\infty$. Then for $\delta\in\Qx$ we have $W_{\varphi_{f}}(a(\delta)^* g_{z}^*\iota_f(\sigma)^*)=0$ unless $\delta=n/\delta(\af)$ for some $n\in\Z$, $n\neq 0$, in which case
\begin{equation*}
	W_{\varphi_{f}}(a(\delta)^* g_{z}^*\iota_f(\sigma)^*)=
	s(\sigma)a_{f}(n;\af)\, \kappa_f(ny/\delta(\af))\,e(nx/\delta(\af))
\end{equation*}
where
$$g_{z}:=n(x)a'(y^{1/2})=\begin{pmatrix}
y^{1/2}&xy^{-1/2}\\&y^{-1/2}
\end{pmatrix}\in \SL_{2}(\R) \text{ with } z=x+iy$$
and we consider the (lifted) scaling matrix $\tilde{\sigma}=(\sigma,(cz+d)^{\frac{1}{2}}) \in \SLt_2(\R)$.
\end{prop}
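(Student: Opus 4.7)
The strategy is to unfold the adelic Whittaker integral
$W_{\varphi_f}(a(\delta)^* g_z^*\iota_f(\sigma)^*) = \int_{\Q\backslash\A}\varphi_f(n(u)^* a(\delta)^* g_z^*\iota_f(\sigma)^*)\overline{\psi(u)}\,du$
and to compare it term-by-term with the classical Fourier expansion \eqref{eq:cusp-fourier-expansion} of $f\vert_{k+\frac{1}{2}}\tilde{\sigma}^{-1}$ at the cusp $\mathfrak{a}$. First I would push $a(\delta)^*$ out of the integrand: the identity $n(u)^* a(\delta)^* = a(\delta)^* n(u/\delta)^*$ holds in $\GLt_2(\A)$ because the metaplectic cocycle $\beta$ vanishes on upper-triangular pairs; and since $a(\delta)^*=a(\delta)^{\varsigma}$ lies in $\GL_2(\Q)^\varsigma$, it acts trivially on $\varphi_f$ on the left. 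A measure-preserving substitution $u\mapsto u\delta$ then yields
$W_{\varphi_f}(a(\delta)^* g_z^*\iota_f(\sigma)^*) = \int_{\Q\backslash\A}\varphi_f(n(u)^* g_z^*\iota_f(\sigma)^*)\overline{\psi(u\delta)}\,du.$

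Next I would transfer $\sigma$ from the finite adelic placement to the archimedean component by left-multiplication by $(\sigma^{-1})^\varsigma\in\GL_2(\Q)^\varsigma$, which leaves $\varphi_f$ invariant by automorphy. Careful place-by-place tracking of the metaplectic cocycle \eqref{eq:local-beta} and the sign function $s$, using Lemma \ref{lm:comparison_cocycle}, produces the overall factor $s(\sigma)$ in the integrand; this is the origin of the $s(\sigma)$ in the target formula. After this manoeuvre, the argument has archimedean component $\sigma^{-1}n(u_\infty) g_z$ and finite component $\sigma^{-1} n(u_p)\sigma$ at each finite place $p$. Restricting to the fundamental domain $[0,1)\times\hat{\of}$ for $\Q\backslash\A$ and invoking \eqref{eq:adelic-embedding} together with the slash operator \eqref{eq:slash-operator-2}, the integrand reduces to
$s(\sigma)\cdot\omega_\chi\bigl(\sigma^{-1} n(u_{\mathrm{fin}})\sigma\bigr)\cdot (f\vert_{k+\frac{1}{2}}\tilde{\sigma}^{-1})(z+u_\infty).$

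Finally, the integral separates into archimedean and finite parts. At the finite places, $\omega_\chi$ evaluated on the $d$-entry $1-cdu_p$ of the conjugate $\sigma^{-1}n(u_p)\sigma$ (with $\sigma=\left(\begin{smallmatrix}a&b\\c&d\end{smallmatrix}\right)$), integrated against $\psi_p(-u_p\delta)$, yields a product of local characters whose combined conductor is $\delta(\mathfrak{a})^{-1}\hat{\of}$---this is precisely the content of the width/denominator calculation of \S \ref{sec:fourier-expn}---and so the finite integral vanishes unless $\delta(\mathfrak{a})\delta\in\Z$. At the archimedean place, inserting \eqref{eq:cusp-fourier-expansion} and applying orthogonality of exponentials on $[0,1)$ against $e(-\delta u_\infty)$ picks out the coefficient indexed by $n=\delta(\mathfrak{a})\delta$, which produces the claimed $a_f(n;\mathfrak{a})\kappa_f(ny/\delta(\mathfrak{a}))e(nx/\delta(\mathfrak{a}))$. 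The principal obstacle is the metaplectic sign bookkeeping in the second step: one must verify place-by-place that the product of $\beta_v$-cocycles together with the various $s_v$-factors arising from transferring $\sigma$ across the places assembles into exactly $s(\sigma)$ globally, exploiting the splittings of $\GLt_2$ over $K^1(N)^*$ and at places of odd residual characteristic.
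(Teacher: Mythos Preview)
Your proposal is correct and follows exactly the route the paper intends. The paper's own proof is a one-line deferral to \cite[Lemma~3.1]{corbett-saha} for the $\GL_2$ case, with the remark that the argument carries over ``taking the correct definition and co-cycle computations into account''; your sketch is precisely that adaptation, and your identification of the $s(\sigma)$ bookkeeping as the principal obstacle is the very point the paper flags.
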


\begin{proof}
This is proved in \cite[Lemma 3.1]{corbett-saha} in the $\GL_{2}$ case. This proof, taking the correct definition and co-cycle computations into account, may be applied mutatis mutandis to the case at hand for $\GLt_{2}$.
\end{proof}

Note that when $\af=\infty$ we simply recover \cite[Lem.~3, p.~388]{waldspurger-81} which states that
\begin{equation*}
W_{\varphi_{f}}(a(n)) = \kappa_f(n)a_f(n).
\end{equation*}
Suppose that $f$ is a classical cusp form which is an eigenfunction of all Hecke operators. Therefore, as discussed in the previous section, the corresponding adelisation $\varphi_f$ generates a genuine cuspidal automorphic representation $\pi_f$, which factors into local parts. Since we assume that the nebentypus of $f$ satisfies $\chi(-1)=1$, we must have $\omega_{\pi_f,\infty}=1$. This follows from the standard adelisation procedure for Dirichlet characters. In particular we have $\Omega(\omega_{\pi_f,\infty})=\{1,\sgn\}$.

We have $\varphi_{f,\infty}\cong (C_{f}\cdot \phi_k,0)$ and we normalise the Whittaker functionals such that 
\begin{equation}
W_{\varphi_f,\infty}^{1}(a(y)^*) = \delta_{y>0} \kappa_f(y) \text{ and } W_{\varphi_f,\infty}^{\sgn}(a(y)^*) = \delta_{y<0}\kappa_f(y).\label{eq:practical_norm}
\end{equation}
In particular, according to Proposition~\ref{pr:relation_whitt_fourier} we find
\begin{equation}
s(\sigma)a_f(n;\af) = \sum_{\substack{\mu\in \Omega(\omega_{\pi_f}),\\ \mu_{\infty}(-1) = \sgn(n)}}\prod_{v<\infty} W_{\varphi_f,v}^{\mu_v}\left(a\left(\frac{n}{\delta(\af)}\right)^*\iota_f(\sigma)^*\right).\label{eq:fin_part_of_fourier_coeff}
\end{equation}

\subsection{Bessel functions revisited from a classical perspective}
\label{sec:bessel-classical}
With the mapping from classical to adelic automorphic forms given in \S \ref{sec:connection-to-adeles}, here we revise the definition of the adelic Bessel functions to fit the classical context. In particular we need to be careful with the normalisation of the Whittaker functional. In the classical setting we implicitly use Whittaker functionals normalised by \eqref{eq:practical_norm}.

We now give an explicit, directly applicable formula for the Bessel transform in the cases needed for the present applications. The missing cases are straightforward to establish using the same recipe.

\begin{lemmy}\label{lm:classical_bessel}
Suppose the Whittaker functionals are normalised as in \eqref{eq:practical_norm} and let $\phi\in \pi_f$ with $W_{\phi,\infty}(a(y)^*)=F(y)$ for $F\in\mathcal{C}^{\infty}(\R)$ with compact support in $\R_{>0}$. Then
\begin{equation}
W_{\phi,\infty}^{\sgn^{\frac{1-\epsilon}2}}(a(\alpha)^*w^*) = [\mathcal{H}_f^{\epsilon,+}F](\alpha) = \int_0^{\infty}\mathcal{J}_f^{\epsilon,+}(\alpha y)F(y)dy.\nonumber
\end{equation}

\begin{itemize}
\item
If $f=\Im^{\frac{2k+1}{4}}F$ for a classical holomorphic modular form $F$ of weight $k+\frac{1}{2}$, then we have $\mathcal{J}_f^{-,+}(x) = 0$ and
\begin{equation}
	\mathcal{J}_f^{+,+}(x) = \delta_{x>0}\sqrt{2}(1-i)\frac{\pi e^{-3\pi i k/2}}{\cos(\pi k)}x^{-\frac{1}{2}}J_{k-\frac{1}{2}}(4\pi\sqrt{x}).\nonumber
\end{equation}

\item
If $f$ is a classical Hecke--Maa\ss\  form of weight $k+\frac{1}{2}$, then we have
\begin{align}
	\mathcal{J}_f^{+,+}(x) &= \delta_{x>0}2x^{-\frac{1}{2}} [K_s((-1)^k\cdot 4\pi i x^{\frac{1}{2}})-iK_s(-(-1)^k\cdot 4\pi i x^{\frac{1}{2}})] \text{ and } \nonumber\\
	\mathcal{J}_f^{-,+}(x) &= \begin{cases}
		\delta_{x<0}2\abs{x}^{-\frac{1}{2}}[(-i)^{-s-1}-i^{-s}]\frac{\Gamma(\frac{1+s}{2}-\frac{k}{2}-\frac{k}{4})}{\Gamma(\frac{1+s}{2}+\frac{k}{2}+\frac{k}{4})}K_s(4\pi\abs{x}^{\frac{1}{2}}), &\text{ if $k$ is even}, \\
		\delta_{x<0}2\abs{x}^{-\frac{1}{2}}[i^{-s-1}+(-i)^{-s}]\frac{\Gamma(\frac{1+s}{2}+\frac{k}{2}+\frac{k}{4})}{\Gamma(\frac{1+s}{2}-\frac{k}{2}-\frac{k}{4})}K_s(4\pi\abs{x}^{\frac{1}{2}}), &\text{ if $k$ is odd}.
	\end{cases} \nonumber
\end{align}
\end{itemize}
\end{lemmy}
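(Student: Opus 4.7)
The plan is to specialize Proposition \ref{prop:bessel-real} to the archimedean component $\pi_{\infty}$ of $\pi_f$ and then match the resulting expression against the explicit Baruch--Mao Bessel kernels. Having chosen $\phi$ so that (with respect to the normalization \eqref{eq:practical_norm} and the hypothesis that $F$ has compact support in $\R_{>0}$) one has $W_{\phi,\infty}^{1}(a(y)^{*}) = F(y)$ and $W_{\phi,\infty}^{\sgn}(a(y)^{*}) \equiv 0$, Proposition \ref{prop:bessel-real} collapses to a single integral
\begin{equation}
W_{\phi,\infty}^{\mu^{\epsilon}}(a(\alpha)^{*}w^{*}) = \int_{\R^{\times}}[-\alpha,y]\,\mu^{\epsilon}(y^{-1})\,j_{\pi_{\infty}}^{\epsilon,+}(\alpha y)\,F(y)\,d^{\times}y.\nonumber
\end{equation}
On the support $y>0$ the Hilbert symbol satisfies $[-\alpha,y]_{\R}=1$ (every positive real is a square) and $\mu^{\epsilon}(y^{-1})=1$ (both $1$ and $\sgn$ are trivial on $\R_{>0}$), so that converting $d^{\times}y=dy/y$ and defining $\mathcal{J}_{f}^{\epsilon,+}(u) := j_{\pi_{\infty}}^{\epsilon,+}(u)/|u|$ produces the claimed integral transform.

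It remains to evaluate $j_{\pi_{\infty}}^{\epsilon,+}$ using the tables in \S \ref{sec:bessel-real}. By the discussion in \S \ref{sec:classical}, the central character of $\pi_{\infty}$ is $\mu=\sgn^{k}$, the additive parameter is $\lambda=1$ since $\psi_{\infty}(x)=e(x)$, and $\pi_{\infty}$ is a discrete series $\sigma(\sgn^{k},k)$ in the holomorphic case and a principal/complementary series $\pi(\sgn^{k},s-\tfrac{1}{2})$ in the Maass case. In the holomorphic case, substitute $s=k-\tfrac{1}{2}$ into \eqref{eq:holo_bessel} and apply the identities
\begin{equation}
\sin\bigl(\pi(k-\tfrac{1}{2})\bigr)=-\cos(\pi k),\qquad e^{-3\pi i(k-1/2)/2}=\tfrac{1}{\sqrt{2}}(-1+i)\,e^{-3\pi ik/2},\nonumber
\end{equation}
to recover the stated formula for $\mathcal{J}_{f}^{+,+}$; the vanishing $\mathcal{J}_{f}^{-,+}\equiv 0$ is immediate from $j_{\pi,1}^{-,+}\equiv 0$ in the discrete series list. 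In the Maass case, direct substitution of $\lambda=1$ and $\mu=\sgn^{k}$ into the principal/complementary series formulae handles $\mathcal{J}_{f}^{+,+}$, while the $(-,+)$ kernel is accessed via the relation $j_{\pi,\lambda}^{-,+}(x)=j_{\pi,-\lambda}^{+,-}(x)$ recorded in \S \ref{sec:bessel-real}.

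The main obstacle is the Gamma ratio and the parity split on $k$ in $\mathcal{J}_{f}^{-,+}$ in the Maass case: no such factor appears in the Baruch--Mao tables and it must come from a normalisation discrepancy. Specifically, \cite[\S 13]{baruch-mao-bessel-real} normalises $L^{\pm}$ through a Jacquet integral in the induced model, whereas our $W_{\phi,\infty}^{\sgn}$ is normalised via \eqref{eq:practical_norm} so as to match the classical $\kappa_{f}$ on negative arguments. To match them one evaluates both functionals on the same test vector, most naturally the lowest $K$-type $\phi_{k}$, and computes the ratio by a standard beta-integral; the Mellin transform of the Jacquet-type integrand collapses to the displayed ratio of Gamma values shifted by the half-integer weight. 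The case split on the parity of $k$ reflects whether the weight-$(k+\tfrac{1}{2})$ $K$-type lies in the $\mu^{+}=1$ or the $\mu^{-}=\sgn$ isotypic component of the restriction to $\widetilde{M}$: this swaps which of the two Whittaker normalisations is the ``direct'' one and which receives the intertwiner factor, producing the two displayed expressions.
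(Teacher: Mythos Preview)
Your proposal is correct and follows essentially the same route as the paper: apply Proposition~\ref{prop:bessel-real}, collapse the $\pm$-sum using the support hypothesis on $F$, read off the Baruch--Mao tables, and repair the off-diagonal kernel by comparing the Jacquet-integral functionals $L^{\pm}$ against the practical normalisation \eqref{eq:practical_norm} on the lowest $K$-type $\phi_k$. One point worth tightening: your displayed reduction to $j_{\pi_\infty}^{\epsilon,+}$ is only literally correct when $k$ is even, since the $\pm$-labelling in Proposition~\ref{prop:bessel-real} is relative to $\mu^{\pm}=\sgn^{(1\pm1)/2}\mu$ with $\mu=\sgn^k$, so for odd $k$ the surviving term is $j_{\pi_\infty}^{\epsilon,-}$ rather than $j_{\pi_\infty}^{\epsilon,+}$ (the paper handles this by writing $\mathcal{J}_f^{+,+}=j_{\pi_{f,\infty}}^{-,-}\abs{x}^{-1}$ etc.\ in the odd case). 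You correctly identify this mechanism in your final paragraph, but it should feed back into the earlier reduction rather than appear only as a remark about the Gamma ratio.
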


\begin{proof}
The existence of the Bessel transform follows directly from Proposition \ref{prop:bessel-real} and the support of $F$. We need only to look up the correct formulae for $j_{\pi_{f,\infty}}^{\pm,+}$ and take care of the correct normalisation. We do so case by case.

If $f$ is a holomorphic modular form of weight $k$, then $\pi_{f,\infty}\cong \sigma(\sgn^k,k)$. In this case only  $j_{\pi_{f,\infty}}^{+,+}$ is non-zero. Moreover, the Bessel functions of the same sign are independent of the normalisation of the Whittaker functionals. Thus the desired formula can be read off directly from \eqref{eq:holo_bessel} with $s=k-\frac{1}{2}$.

If $f$ is  Hecke--Maa\ss\  form of even weight $k$, we again read off the Bessel function with equal signs directly.  For the remaining situation we first make the following observation. If the normalised Whittaker functionals are given by $L_{\text{pr}}^{\pm} = K^{\pm}_f L^{\pm}$ for some constants $K^{\pm}_f$, then
\begin{equation}
	\mathcal{J}_f^{-,+}(x) = \frac{K^-_f}{K^+_f}\cdot j_{\pi_f}^{-,+}(x)\cdot \abs{x}^{-1}.\nonumber
\end{equation}
In practical terms we have $L^{\pm 1}_{\text{pr}}(v_{v,\infty}) = \kappa_f(\pm1).$ On the other hand, we compute
\begin{equation}
	L^{\pm}(\pi(a(\pm 1)^*)v_{f,\infty}) = C_f\int_{\R} \phi_k(w^*n(x)^*)\psi(\mp x)dx = \frac{C_f\pi^{\frac{s+}{2}}}{\Gamma(\frac{1\pm k\pm\frac{1}{2}+s}{2})}\kappa_f(\pm 1).\nonumber
\end{equation}
The last equality is a standard computation. The upshot is that 
\begin{equation}
	K^{\pm}_f= \frac{\Gamma(\frac{1\pm k\pm\frac{1}{2}+s}{2})}{C_f\pi^{\frac{s+1}{2}}}.\nonumber
\end{equation}
The desired Bessel function is now easily determined using the results from \S \ref{sec:bessel-real}.

Finally, if $f$ is a Hecke--Maa\ss\  form of odd weight $k$, then $v_{f,\infty} = C_f(\phi_k,0)$ in the restriction of $\pi(\sgn,s)$ to $\SLt_2(\R)$. However, in \S \ref{sec:bessel-real} we computed the Bessel transforms with respect to the opposite character. This is accounted for by setting
\begin{equation}
	\mathcal{J}_f^{+,+}(x) = j_{\pi_{f,\infty}}^{-,-}(x)\abs{x}^{-1} \text{ and } \mathcal{J}_f^{-,+}(x) = K_f\cdot j_{\pi_{f,\infty}}^{+,-}(x)\abs{x}^{-1}.\nonumber
\end{equation}
Evaluating the constant $K_f$ is similar to the case for even $k$.
\end{proof}

\subsection{Classical \texorpdfstring{Vorono\"i}{Voronoi} formulae}
\label{sec:main-classical}

Here we state and prove our main classical result and a related corollary.

\begin{theorem}\label{thm:classical-Voronoi}
Let $f\in \tilde{\mathcal{A}}^{\circ}(N,\chi)$ (see Definition \ref{def:classical-forms}). For a smooth function $F\colon\R\to\R$ with compact support in $\R_{>0}$, $b\in\N$ and $a\in\Z$ with $\ggT(a,bN)=1$ we have
\begin{multline}
	\sum_{n\in\Z_{\neq 0}}e\bigg(\frac{an}{b}\bigg) a_{f}(n)  F\left(n\right) = \sum_{n\in\Z_{\neq 0}}e\left(-n\frac{\overline{a}}{b\delta(\mathfrak{b})}\right) a_f(n;\mathfrak{b})[\mathcal{H}_f^{\sgn(n),+}F]\left(\frac{n}{\delta(\mathfrak{b})b^2}\right)
	\nonumber
\end{multline}
for the cusp $\mathfrak{b} = \frac{a}{b}$, as per \eqref{eq:cusp-fourier-expansion}, and the Bessel transform $\mathcal{H}_f^{\pm,+}F$ of $F$ are determined by the archimedean type of $f$; see \S \ref{sec:bessel-classical}.
\end{theorem}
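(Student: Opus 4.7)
The plan is to apply Theorem~\ref{thm:adelic-Voronoi} to the adelic lift $\varphi_f$ constructed in \S\ref{sec:connection-to-adeles} and then de-adelize the resulting identity. First, I take the vector $\phi$ in Theorem~\ref{thm:adelic-Voronoi} to be a modification of $\varphi_f$ whose archimedean Whittaker coordinate equals $F$ in the Kirillov model; such a vector exists because $\Sch(\Rx)\subset K(\pi,\psi)$, and the modified $\phi$ remains right-$N(\hat{\of})^*$-invariant. The set $S_\phi$ then consists of $\infty$ together with the primes dividing $N$. Next, I choose $\zeta\in\A$ with $\zeta_\infty=0$ and $\zeta_p$ a fixed idelic representative of $a/b$ at each finite place, so that $\psi(n\zeta)=e(an/b)$ for $n\in\Z$. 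Under $\ggT(a,bN)=1$, the set $S_\zeta$ equals $\{p:p\mid b\}$ and is disjoint from $S_\phi$.

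Second, I would identify the left-hand side. The archimedean Whittaker coordinate encodes $F(n)$ through the normalisation \eqref{eq:practical_norm}; the support result of Lemma~\ref{lem:support-whittaker} restricts $\xi$ to $\Z_{\neq 0}$; and Proposition~\ref{pr:relation_whitt_fourier} applied at the cusp $\infty$ identifies the product of finite Whittaker values with $a_f(n)$. Summing over $\mu\in\Omega(\omega_{\pi_f})$ merges the contributions with $\mu_\infty=1$ and $\mu_\infty=\sgn$ according to $\sgn(n)$. The left-hand side of Theorem~\ref{thm:adelic-Voronoi} therefore collapses to $\sum_{n\in\Z_{\neq 0}} a_f(n)\,e(an/b)\,F(n)$, matching the target identity.

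Third, I treat the right-hand side. The archimedean factor $[\mathcal{H}_\infty^{\mu_\infty^\xi,\mu'}W_{\varphi_f,\infty}^{\mu_\infty^\xi}]$, once summed over $\mu'\in\Omega(\omega_{\pi_f,\infty})$, is by Lemma~\ref{lm:classical_bessel} precisely the classical transform $\mathcal{H}_f^{\sgn(n),+}F$. After re-indexing $\xi=n/\delta(\mathfrak{b})$ (natural because of Proposition~\ref{pr:relation_whitt_fourier}), the scaling $\iota_{S_\zeta}(a(\zeta^{-2}))^*$ at primes dividing $b$ produces the factor $b^2$, landing the argument of $\mathcal{H}_f^{\sgn(n),+}F$ at $n/(\delta(\mathfrak{b})b^2)$. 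The remaining finite adelic content --- unramified Whittaker values and, at primes dividing both $b$ and $N$, the non-spherical Bessel values --- should be rewritten, using strong approximation and Lemma~\ref{lm:right_ak_K}, as $\prod_{v<\infty} W_{\varphi_f,v}^{\mu_v^\xi}(a(n/\delta(\mathfrak{b}))^*\iota_{\mathrm{fin}}(\sigma)^*)$ for a scaling matrix $\sigma$ of $\mathfrak{b}=a/b$; by \eqref{eq:fin_part_of_fourier_coeff} this equals $s(\sigma)\,a_f(n;\mathfrak{b})$. Finally the idelic phase $\prod_{p\mid b}[\zeta_p,-1]\mu_p(\zeta_p)\psi_p(-\xi\zeta_p^{-1})$ telescopes, via the product formulae for $\psi$ and $\mu$ on $\Q^\times$, to the expected twist $e(-n\overline{a}/(b\delta(\mathfrak{b})))$.

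The main obstacle is precisely this identification of the finite adelic product as $a_f(n;\mathfrak{b})$; it is most delicate at primes dividing both $b$ and $N$, where $\varphi_f$ is non-spherical and the output of Theorem~\ref{thm:adelic-Voronoi} is a local Bessel transform rather than a mere translation. To conclude one must rewrite $a(\xi)^*w^*n(\zeta_p)^*$ as a scaling matrix for $\mathfrak{b}$ times an element of $K_0(N)^*$, track the central character value $\omega_\chi(d)$ via Lemma~\ref{lm:right_ak_K}, and compare the metaplectic co-cycle $s$ with the theta multiplier $j_\vartheta$ via Lemma~\ref{lm:comparison_cocycle}. It is in this step that the factor $\delta(\mathfrak{b})$ and the sign $s(\sigma)$ of Proposition~\ref{pr:relation_whitt_fourier} fall into place, completing the identification.
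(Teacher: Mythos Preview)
Your plan is essentially sound, but it takes a detour that the paper avoids. You route the argument through Theorem~\ref{thm:adelic-Voronoi}, which at every finite place $v\in S_\phi$ (i.e.\ every $p\mid N$) replaces the Whittaker value $W_{\phi,v}^{\mu^\xi}(a(\xi)^*w^*n(\zeta_v)^*)$ by its Bessel-integral expression. You then recognise---correctly---that the ``main obstacle'' is to rewrite $a(\xi)^*w^*n(\zeta_p)^*$ as $z(\cdot)a(\cdot)n(\cdot)\sigma_{\mathfrak b}^*$ and thereby recover a single Whittaker value at the scaling matrix. But this last step is exactly the inverse of what Theorem~\ref{thm:adelic-Voronoi} did at those places: you are applying Proposition~\ref{prop:bessel-padic} and then undoing it. The paper therefore skips Theorem~\ref{thm:adelic-Voronoi} entirely and works directly from the automorphy relation $[\pi(n(-a/b)_{\f}^*)\phi](1)=[\pi(n(-a/b)_{\f}^*)\phi](w^*)$ (the content of Proposition~\ref{prop:fundamental-identity}), applying the Bessel transform \emph{only} at the archimedean place via Lemma~\ref{lm:classical_bessel}. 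The entire finite Whittaker function $W_{\varphi_f,\f}^{\mu}(a(\alpha)^*w^*n(-a/b)^*)$ is then handled in one stroke by a global matrix identity over $\Q$: one writes $a(\alpha)w\,n(-a/b)=z(b^{-1})a(\alpha b^2)n(\overline{a}/b)\,\sigma_{\mathfrak b}$ with $\sigma_{\mathfrak b}=\left(\begin{smallmatrix}\overline{a}&(1-a\overline{a})/b\\-b&a\end{smallmatrix}\right)\in\SL_2(\Z)$, tracks the metaplectic cocycle, and invokes \eqref{eq:fin_part_of_fourier_coeff}. No place-by-place case split between $S_\zeta$ and $S_\phi$ is needed.

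Two smaller points. First, your description $S_\zeta=\{p:p\mid b\}$ is not quite right: by the definition in Theorem~\ref{thm:adelic-Voronoi}, $S_\zeta$ consists only of those $p\mid b$ at which $\phi_p$ is \emph{spherical}, so primes dividing $\gcd(b,N)$ lie in $S_\phi$, not $S_\zeta$; the hypothesis $\gcd(a,bN)=1$ constrains only $a$. Second, the paper's proof makes transparent why the phase comes out as $e(-n\overline{a}/(b\delta(\mathfrak b)))$: the $n(\overline{a}/b)$ factor in the decomposition above produces $\psi_{\f}(\alpha b\overline{a})=e(-\alpha b\overline{a})$, and then $\alpha=n/(\delta(\mathfrak b)b^2)$ from \eqref{eq:fin_part_of_fourier_coeff}. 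In your approach this bookkeeping is split across the phases $\psi_v(-\xi\zeta_v^{-1})$, the central factors $[\zeta_v,-1]\mu_v(\zeta_v)$, and the residual $p$-adic Bessel data, which is harder to assemble cleanly.
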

\begin{proof}
Fix an isomorphism $\pi=\pi_{\infty}\otimes\pi_{\f}$ under which $\varphi_f=(v_{\varphi_f,\infty},v_{\varphi_f,\f})$. Via the same isomorphism we construct a cusp form $\phi = (v_{\infty},v_{\f})$ determined by
$	v_{\f}= v_{\varphi_f,\f}\nonumber$,
$	W_{v_{\infty}}^1(a(\cdot)) = F(\cdot)$ and
$W_{v_{\infty}}^{\sgn}(a(\cdot))=0.$
The latter determines $v_{\infty}$ in the Kirillov model. We now compute
\begin{align}
	[\pi(n(-\frac{a}{b})^*_{\f})\phi](1) &= \sum_{\alpha\in\Qx} e(\alpha\frac{a}{b})W_{\phi}(a(\alpha)^*)\nonumber \\
	&= \sum_{\alpha\in\Qx}e(\alpha\frac{a}{b})\sum_{\mu\in\Omega(\omega_{\pi})}W_{\phi}^{\mu}(a(\alpha)^*) \nonumber  \\
	&= \sum_{\alpha\in\Q_+}e(\alpha\frac{a}{b})F(\alpha)\sum_{\substack{\mu\in\Omega(\omega_{\pi}),\\ \mu_{\infty}(-1)=1}}W_{\phi,\f}^{\mu}(a(\alpha)^*) \nonumber
	\\
	&= \sum_{\alpha\in\Q_+}e(\alpha\frac{a}{b})F(\alpha)\sum_{\substack{\mu\in\Omega(\omega_{\pi}),\\ \mu_{\infty}(-1)=1}}W_{\varphi_f,\f}^{\mu}(a(\alpha)^*) \nonumber
	\\ 
	&= \sum_{n\in \N} e(n\frac{a}{b})a_f(n;\af)F(n).\nonumber
\end{align}
This is the left-hand side of the equation we want to prove. Due to automorphy of $\phi$ we have the central equality
\begin{multline}
[\pi(n(-\frac{a}{b})^*_{\f})\phi](1) = [\pi(n(-\frac{a}{b})^*_{\f})\phi](w^*) = 
\\
\sum_{\alpha\in\Qx}\sum_{\mu\in\Omega(\omega_{\pi})}W_{\phi,\f}^{\mu}(a(\alpha)^*w^*n(-\frac{a}{b})^*)W_{\phi,\infty}^{\mu_{\infty}}(a(\alpha)^*w^*) .
\nonumber
\end{multline}
This already played a key role in the proof of Proposition~\ref{prop:fundamental-identity}. The desired equality now follows after rewriting the Whittaker expansion of the latter. We start by transforming the archimedean place. Here, according to Lemma~\ref{lm:classical_bessel}, we have
\begin{equation}
	W_{\phi,\infty}^{\sgn^{\frac{1-\epsilon}2}}(a(\alpha)^*w^*)  = [\mathcal{H}_f^{\epsilon,+}F](\alpha).\nonumber
\end{equation}
Here we use that $F$ has support in $\R_{>0}$. Since $j_{\pi_{\infty}}^{\epsilon,+}$ is supported on $\epsilon\R_{>0}$ we have
\begin{align}
	[\pi(n(-\frac{a}{b})^*_{\f})\phi](w^*) &= \sum_{\alpha\in\Qx}\sum_{\epsilon\in\{\pm\}}[\mathcal{H}_f^{\epsilon,+}F](\alpha)\sum_{\substack{\mu\in\Omega(\omega_{\pi}),\\ \mu_{\infty}=\sgn^{\frac{1-\epsilon}{2}}}}W_{\phi,\f}^{\mu}(a(\alpha)^*w^*n(-\frac{a}{b})^*) \nonumber\\
	&= \sum_{\alpha\in\Qx}[\mathcal{H}_f^{\sgn(\alpha),+}F](\alpha)\sum_{\substack{\mu\in\Omega(\omega_{\pi}),\\ \mu_{\infty}(-1)=\sgn(\alpha)}}W_{\varphi_f,\f}^{\mu}(a(\alpha)^*w^*n(-\frac{a}{b})^*). \nonumber
\end{align}
It remains to investigate the finite part of the $\mu$-Whittaker functions. We compute
\begin{equation}
	a(\alpha)^*w^*n(-\frac{a}{b})^* = \left(\left(\begin{matrix} 0 & \alpha  \\ -1 &\frac{a}{b} \end{matrix}\right),\sgn(\alpha)\right) = (\gamma_1,\sgn(\alpha)). \nonumber
\end{equation}
On the other hand we have
\begin{equation}
	z(b^{-1})^*a\left(\alpha b^2\right)^*n\left(\frac{\overline{a}}{b}\right)^* \\ = \left(\begin{matrix}\alpha b & \alpha \overline{a} \\ 0& \frac{1}{b} \end{matrix}\right)^*=\gamma_{2}^*.\nonumber
\end{equation}
Note that $(\gamma_2^*)^{-1} = (\smat{ \frac{1}{\alpha b}} { -\overline{a} }{0}{ b},[b,-\alpha]_{\infty}).$ We can assume without loss of generality that $b>0$ such that $[b,-\alpha]_{\infty}=1$. Artificially we define 
\begin{equation}
	\sigma_{\mathfrak{b}}^* = (\gamma_2,\prod_v s_v(\sigma_{\mathfrak{b}}))^{-1}(\gamma_1,\sgn(\alpha)) =  \left(\begin{matrix}
	\overline{a} & \frac{ 1-a\overline{a}}{b} \\ -b & a
	\end{matrix}\right)^*.\nonumber
\end{equation}
We recognise $\sigma_{\mathfrak{b}}$ as the scaling matrix of the cusp $\frac{a}{b}$. We have set things up so that
\begin{equation}
		W_{\varphi_f,\f}^{\mu} (a(\alpha)^*w^*n(-\frac{a}{b})^*) = s(\sigma_{\mathfrak{b}})\psi_{\f}(\alpha b\overline{a})W_{\varphi_f,\f}^{\mu} (a(\alpha b^2)^*\sigma_{\mathfrak{b}}^*).\nonumber 
\end{equation}
We conclude by using \eqref{eq:fin_part_of_fourier_coeff}.
\end{proof}

\begin{rem}
Note that in the theorem one can always replace $a$ by $a' = a+b\cdot\frac{N}{\ggT(N,a^{\infty}b^{\infty})}$ without changing the left hand side. The upshot is that $a'$ satisfies $\ggT(a',b\delta(\mathfrak{b}))=1$. Of course this modification changes the scaling matrix and the exponential on the resulting right hand side.
\end{rem}

We have essentially used only the adelic language to separate the archimedean part from the Fourier coefficients. This enables us to use the Kirillov model to insert test functions of choice. This theorem has some obvious generalisations. First, one can start from an arbitrary cusp $\af$. In this case the proof remains the roughly same, only the matrix computations become more involved. Second, one can use test functions $F$ with $\text{supp}(f)\cap \R_{<0}\neq \emptyset$. In this situation one will encounter all archimedean Bessel transforms.

To demonstrated the scope of Theorem \ref{thm:classical-Voronoi}, we here derive a Vorono\"i formula which is standard in the $\GL_2$ setting.

\begin{cor}
Suppose $N\mid b$, $\ggT(a,b)=1$ and $b>0$. Then
\begin{multline}
	\sum_{n\in\N}e\bigg(n\frac{a}{b}\bigg) a_{f}(n)   F\left(n\right) = 
	\\
	\overline{\epsilon_{a}}\left(\frac{b}{\overline{a}}\right)\chi_4(a)^k\chi(a)^{-1}\sum_{n\in\Z_{\neq 0}}e\left( -n\frac{\overline{a}}{b}\right) a_f(n)[\mathcal{H}_f^{\sgn(n),+}F]\left(\frac{n}{b^ 2}\right).
	\nonumber  
\end{multline}
\end{cor}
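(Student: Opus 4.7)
My plan is to directly specialise Theorem~\ref{thm:classical-Voronoi} to the cusp $\mathfrak{b}=a/b$ and then identify $a_f(n;\mathfrak{b})$ as a uniform scalar multiple of $a_f(n)$ by invoking automorphy of $f$, since the scaling matrix will lie in $\Gamma_0(N)$ once $N\mid b$. The hypothesis $\ggT(a,bN)=1$ is automatic: when $N\mid b$, one has $\ggT(a,bN)=\ggT(a,b)=1$. For the cusp $\mathfrak{b}=a/b$ of denominator $q=b$, the width is $w(\mathfrak{b})=N/\ggT(b^2,N)=1$ and, using $M\mid N\mid b$, $\delta(\mathfrak{b})=M/\ggT(b,M)=1$. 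Substituting these into Theorem~\ref{thm:classical-Voronoi} reduces the corollary to establishing the scalar identification $a_f(n;\mathfrak{b})=\overline{\epsilon_a}\bigl(\tfrac{b}{\overline{a}}\bigr)\chi_4(a)^k\chi(a)^{-1}a_f(n)$.

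To establish this identification, I use the explicit scaling matrix $\sigma_\mathfrak{b}=\bigl(\begin{smallmatrix}\overline{a}&(1-a\overline{a})/b\\-b&a\end{smallmatrix}\bigr)$ from the proof of Theorem~\ref{thm:classical-Voronoi}; since $N\mid b$, both $\sigma_\mathfrak{b}$ and its inverse lie in $\Gamma_0(N)$, so Definition~\ref{def:classical-forms} gives $f\vert_{k+\frac{1}{2}}\tilde{\gamma}=\chi(\overline{a})f$ for $\gamma=\sigma_\mathfrak{b}^{-1}$ (whose bottom-right entry is $\overline{a}$), where $\tilde{\gamma}$ is the $\vartheta$-lift. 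On the other hand, the lift $\tilde{\sigma}_\mathfrak{b}^{-1}$ underlying the Fourier expansion of Proposition~\ref{pr:relation_whitt_fourier} is computed from the metaplectic multiplication law to be $(\sigma_\mathfrak{b}^{-1},(bz+\overline{a})^{1/2})$, whereas the $\vartheta$-lift has second component $\overline{\epsilon_{\overline{a}}}\bigl(\tfrac{b}{\overline{a}}\bigr)(bz+\overline{a})^{1/2}$. The two lifts therefore differ by the central scalar $c=\overline{\epsilon_{\overline{a}}}\bigl(\tfrac{b}{\overline{a}}\bigr)$.

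The crux, and expected main obstacle, is to translate this scalar discrepancy through the half-integral weight slash operator. From \eqref{eq:slash-operator-2}, rescaling the second component of a lift by $c$ multiplies $f\vert_{k+\frac{1}{2}}$ by $(\abs{c}/c)^{2k+1}=\overline{c}^{2k+1}$ (using $\abs{c}=1$), so $\chi(\overline{a})f=\overline{c}^{2k+1}\,f\vert_{k+\frac{1}{2}}\tilde{\sigma}_\mathfrak{b}^{-1}$. Expanding $\overline{c}^{2k+1}=\chi_4(\overline{a})^k\epsilon_{\overline{a}}\bigl(\tfrac{b}{\overline{a}}\bigr)$ via $\epsilon_d^2=\chi_4(d)$, inverting, and then applying $\chi(\overline{a})=\chi(a)^{-1}$, $\chi_4(\overline{a})=\chi_4(a)$ and $\overline{\epsilon_{\overline{a}}}=\overline{\epsilon_a}$ — the last two coming from $a\overline{a}\equiv 1\pmod 4$ (forced by $4\mid N\mid b$) — delivers precisely the scalar in the claim. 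Reading off Fourier coefficients from $f\vert_{k+\frac{1}{2}}\tilde{\sigma}_\mathfrak{b}^{-1}$ yields the required identification of $a_f(n;\mathfrak{b})$, and substitution into the specialised Theorem~\ref{thm:classical-Voronoi} completes the proof.
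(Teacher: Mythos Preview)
Your proof is correct and follows essentially the same route as the paper's: specialise Theorem~\ref{thm:classical-Voronoi} with $\delta(\mathfrak{b})=1$, observe $\sigma_{\mathfrak{b}}\in\Gamma_0(N)$, and deduce the scalar from the transformation behaviour of $f$ under $\tilde{\sigma}_{\mathfrak{b}}^{-1}$. The paper leaves the computation of the scalar as a one-line assertion, whereas you have carefully unpacked it by comparing the metaplectic lift $\tilde{\sigma}_{\mathfrak{b}}^{-1}=(\sigma_{\mathfrak{b}}^{-1},(bz+\overline{a})^{1/2})$ to the $\vartheta$-lift and tracking the effect of the discrepancy through the slash operator; this is exactly the content hidden in the paper's phrase ``the transformation behaviour of $f$ under $\tilde{\sigma}_{\mathfrak{b}}^{-1}$ implies''.
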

\begin{proof}
We simply apply our main theorem. Recall the cusp data
\begin{equation}
	\mathfrak{b} = \frac{a}{b} \text{ with } \sigma_{\mathfrak{b}} = \left(\begin{matrix}
	\overline{a} & \frac{1-\overline{a}a}{b} \\ -b & a
	\end{matrix}\right).\nonumber
\end{equation}
Since $N\mid b$ we have $\mathfrak{b} = \infty$ and in particular $\delta(\mathfrak{b}) = 1$. On the level of scaling matrices this reduces to $\sigma_{\mathfrak{b}}\in\Gamma_0(N)$. Thus the transformation behaviour of $f$ under $\tilde{\sigma}_{\mathfrak{b}}^{-1}$ implies $$a_f(n,\mathfrak{b}) = \overline{\epsilon_{a}}\left(\frac{b}{\overline{a}}\right)\chi_4(a)^k\chi(a)^{-1}a_f(n).$$ 
\end{proof}


\bibliographystyle{amsplain}								
\bibliography{Metaplectic-refs-last-3}				

\end{document}
